\numberwithin{equation}{section}
                        \theoremstyle{plain}
\newcommand\no[1]{}
\newtheorem{theorem}{Theorem}[section]
\newtheorem{lemma}[theorem]{Lemma}
\newtheorem{proposition}[theorem]{Proposition}
\theoremstyle{definition}
\newtheorem{remark}[theorem]{Remark}
\newcommand{\p}{\partial}
\def\BC{\mathbb C}
\def\BZ{\mathbb Z}
\def\BR{\mathbb R}
\def\BH{\mathbb H}
\def\CP{\mathcal P}
\def\CT{\mathcal T}
\def\fb{\mathfrak b}
\def\la{\langle}
\def\ra{\rangle}
\DeclareMathOperator{\tr}{\mathrm tr}
\def\ve{\varepsilon}
\def\be { \begin{equation} }
\def\ee { \end{equation} }
\begin{document}

\title[Twisted Alexander polynomials of $2$-bridge knots]
{Twisted Alexander polynomials of $2$-bridge knots \\
for parabolic representations}

\author[Takayuki Morifuji]{Takayuki Morifuji}
\address{Department of Mathematics, Hiyoshi Campus, Keio University, Yokohama 223-8521, Japan}
\email{morifuji@z8.keio.jp}

\author[Anh T. Tran]{Anh T. Tran}
\address{Department of Mathematics, The Ohio State University, Columbus, OH 43210, USA}
\email{tran.350@osu.edu}

\thanks{2010 \textit{Mathematics Subject Classification}.\/ 57M27.}
\thanks{{\it Key words and phrases.\/}
$2$-bridge knot, parabolic representation, 
twisted Alexander polynomial.}

\begin{abstract}
In this paper we show that the twisted Alexander polynomial associated to 
a parabolic representation determines fiberedness and genus of a wide class 
of $2$-bridge knots. As a corollary we give an affirmative answer to 
a conjecture of Dunfield, Friedl and Jackson for infinitely many hyperbolic knots. 
\end{abstract}

\maketitle

\section{Introduction}

The twisted Alexander polynomial was introduced by Lin \cite{Lin01-1} 
for knots in the $3$-sphere and by Wada \cite{Wada94-1} for finitely presentable groups. 
It is a generalization of the classical Alexander polynomial and gives a powerful 
tool in low dimensional topology. A theory of twisted Alexander polynomials was 
rapidly developed during the past ten years. Among them, one of the most important progress 
is the determination of fiberedness \cite{FV11-1} and genus (the Thurston norm) \cite{FV12-1} 
of knots by the collection of the twisted Alexander polynomials corresponding to all 
finite-dimensional representations. 
For literature on other applications and related topics, we refer to the survey paper by
Friedl and Vidussi \cite{FV10-1}.

Let $K$ be a knot in $S^3$ and $G_K$ its knot group. Namely it is 
the fundamental group of the complement of $K$ in $S^3$, i.e. $G_K=\pi_1(S^3\backslash K)$. 
In this paper, we consider the twisted Alexander polynomial 
$\Delta_{K,\rho}(t)\in \BC[t^{\pm1}]$ associated to a parabolic representation 
$\rho:G_K\to SL_2(\BC)$. 
A typical example is the holonomy representation 
$\rho_0:G_K\to SL_2(\BC)$ of a hyperbolic knot $K$, 
which is a lift of a discrete faithful representation 
$\bar{\rho}_0:G_K\to PSL_2(\BC)\cong\text{Isom}^+(\BH^3)$ such that 
$\BH^3/\bar{\rho}_0(G_K)\cong S^3\backslash K$ 
where $\BH^3$ denotes the upper half space model of the hyperbolic $3$-space (see \cite{Th}). 
In \cite{DFJ}, Dunfield, Friedl and Jackson numerically computed the twisted Alexander 
polynomial $\CT_K(t)=\Delta_{K,\rho_0}(t)$, which is called the hyperbolic torsion polynomial, 
for all hyperbolic knots of 15 or fewer crossings. 
Based on these huge computations, they conjectured that the hyperbolic torsion 
polynomial determines the knot genus and moreover, the knot is fibered 
if and only if $\CT_K(t)$ is a monic polynomial. 
This conjecture seems to be nice because we would say fiberedness and genus of a given 
knot by the twisted Alexander polynomial associated to a single representation. 
However it is widely open except for the hyperbolic twist knots \cite{Mo}. 

The purpose of this paper is to show that the above conjecture is true for a wide 
class of $2$-bridge knots. Since $2$-bridge knots are alternating, their 
fibering and genus can be determined by the Alexander polynomial \cite{Cr, Mu}. 
However there seems to be no a priori reason that the same must 
be true for the hyperbolic torsion polynomial. 

For a prime $p$ and an integer $a$ between $1$ and $p-1$, we say that $a$ is a primitive root modulo $p$ if it is a generator of the cyclic group $(\BZ/p\BZ)^*$. Let $\CP_2$ be the set of all odd primes $p$ such that $2$ is a primitive root modulo $p$. 
Note that all primes $p=2q+1$ such that $q$ is a prime $\equiv 1 \pmod{4}$ are contained in $\CP_2$, see e.g. \cite[Theorem 5.6]{LeV}. 
Then we have the following: 

\begin{theorem}\label{thm:DFJ-conjecture}
Let $K$ be the knot $J(k,2n)$ as in Figure 1, where $k>0$ and $n\in \BZ$. 
For all hyperbolic knots $K$, the hyperbolic torsion polynomial 
$\CT_K(t)$ determines the genus of $K$. Moreover for
$k=2m+1$, or $k=2$ (twist knot), or $k=2m$ and $|4mn-1| \in \CP_2$, 
the knot $J(k,2n)$ is fibered if and only if $\CT_K(t)$ is monic.
\end{theorem}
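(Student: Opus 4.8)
The plan is to compute the hyperbolic torsion polynomial $\CT_K(t)$ explicitly for the families $J(k,2n)$ and then read off both its $t$-degree (for the genus) and its leading coefficient (for fiberedness). First I would fix the standard two-bridge presentation $G_K=\la a,b \mid wa=bw\ra$, where $w$ is the palindromic word coming from the continued fraction of the fraction associated to $J(k,2n)$, and parametrize the parabolic $SL_2(\BC)$-representations by
\[
\rho(a)=\begin{pmatrix} 1 & 1 \\ 0 & 1 \end{pmatrix},\qquad
\rho(b)=\begin{pmatrix} 1 & 0 \\ -u & 1 \end{pmatrix},
\]
with $u$ a root of the Riley polynomial $\phi_K(u)\in\BZ[u]$; the holonomy representation $\rho_0$ corresponds to a distinguished root $u_0$. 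Applying Wada's formula $\CT_K(t)=\det\Phi(\p r/\p a)/\det\Phi(b-1)$, where $\Phi$ combines $\rho$ with the abelianization (so $\det\Phi(b-1)=(t-1)^2$), and exploiting the twist structure, I would set up a Chebyshev-type recursion in $n$ (and in $k$) for $\rho(w)$ and for the Fox derivative. This reduces $\CT_K(t)$ to a closed form: a polynomial in $t$ whose coefficients lie in $\BZ[u]$, evaluated at $u=u_0$.

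For the genus I would compute $\deg_t\CT_K(t)$ and verify that it equals $4g-2$, the value predicted by Dunfield--Friedl--Jackson \cite{DFJ}. Since $J(k,2n)$ is alternating, its classical Alexander polynomial has degree $2g$, and $g$ is known explicitly in terms of $k$ and $n$; hence confirming $\deg_t\CT_K=4g-2$ shows $\CT_K$ determines the genus. The point to check is that the extreme coefficients of the Fox-derivative matrix do not vanish at $u_0$, which holds because $u_0\neq 0$ for the irreducible, nonabelian holonomy.

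For fiberedness I would isolate the leading $t$-coefficient of $\CT_K$, which the recursion expresses, up to sign and a power of $u_0$, as an explicit algebraic integer $\lambda(u_0)\in\BZ[u_0]$. Monicity of $\CT_K$ is then equivalent to $\lambda(u_0)$ being a unit. By \cite{Cr, Mu} the knot is fibered exactly when its Alexander polynomial is monic, and the fibered subfamily is explicit in $k,n$; so the theorem reduces to matching unit-ness of $\lambda(u_0)$ with membership in that subfamily. For $k=2m+1$ odd and for the twist knots $k=2$, a direct evaluation of $\lambda(u_0)$ settles this.

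The main obstacle, and the source of the hypothesis $|4mn-1|\in\CP_2$, lies in this last step when $k=2m$ is even: there $\lambda(u_0)$ is not obviously a unit, and deciding when it is requires controlling the factorization of $\phi_K$. When $2$ is a primitive root modulo $p:=|4mn-1|$, the relevant factor of the Riley polynomial is irreducible over $\BQ$, so all parabolic roots are Galois-conjugate and the norm of $\lambda(u_0)$ is computable as the integer resultant $\mathrm{Res}(\phi_K,\lambda)$; I would then show this resultant has absolute value $1$ precisely in the fibered cases. The primitive-root condition is exactly what forces the irreducibility needed to pass from the single, transcendentally-defined root $u_0$ to an honest integer norm, and this number-theoretic control is where the real difficulty sits.
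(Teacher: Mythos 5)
Your overall skeleton matches the paper's: Wada's formula with Fox calculus on the relator $w^naw^{-n}b^{-1}$, Chebyshev recursions to extract the extreme $t$-coefficients, and the primitive-root condition $|4mn-1|\in\CP_2$ used to force irreducibility of the parabolic Riley polynomial via reduction mod $2$ and the cyclotomic polynomial $C_p(u)\in\BZ_2[u]$. (The paper actually proves the statement for \emph{every} parabolic representation, Theorem~\ref{thm:main-theorem}, and deduces Theorem~\ref{thm:DFJ-conjecture}; this sidesteps having to identify which root of the Riley polynomial is the holonomy.) However, there are two genuine gaps in your plan.

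First, for the genus statement you assert that the extreme coefficients of the Fox-derivative determinant do not vanish at $u_0$ ``because $u_0\neq 0$.'' This is not an argument. The extreme coefficient is, e.g.\ for $J(2m,2n)$ with $n>0$, the polynomial $\frac{T_n(\lambda)-2}{\lambda-2}\cdot\frac{T_m(y)-2}{y-2}$, which has many roots, and nothing a priori prevents one of them from being a root of $\phi_K(2,y)$. Ruling this out is the real content of the genus proof (Lemmas~\ref{twist}, \ref{2} and \ref{lem1} in the paper), and it is done by a case analysis that crucially exploits parabolicity: one reduces to $\lambda=-2$ or $y=-2$ and derives a contradiction either from $x=2\in\BR$ (a sum of nonnegative reals cannot equal $-4$) or from a parity argument mod $2$ using $x\in\BZ$. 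Note also that the genus claim holds with \emph{no} hypothesis on $|4mn-1|$, so irreducibility of $\phi_K(2,y)$ is not available here and cannot be used to patch the gap.

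Second, for fiberedness with $k=2m+1$ you propose ``a direct evaluation'' of the leading coefficient. In fact the leading coefficient is $\frac{T_m(y)-2}{y-2}$ evaluated at a root of $\phi_K(2,y)$, and showing it never equals $1$ requires knowing where the real parabolic roots lie: the paper first proves (Proposition~\ref{al}, via the identity $1=(\alpha_k^2-\alpha_k\lambda+1)S_{n-1}^2(\lambda)$ and Lemma~\ref{al1}) that every real root of $\phi_{k,2n}(2,y)$ satisfies $y>2$, while all solutions of $T_m(y)=y$, $y\neq 2$, are real and $<2$. Without this positivity input the odd case does not close. For $k=2m$ your resultant idea is essentially the paper's argument, but aimed at the wrong target: what is needed is $\mathrm{Res}(\phi_K(2,\cdot),\,h-1)\neq 0$ where $h(y)=\frac{T_n(\lambda)-2}{\lambda-2}\cdot\frac{T_m(y)-2}{y-2}$, and this follows cheaply from irreducibility together with the degree count $\deg h=2mn-(m+1)<\tfrac{p-1}{2}=\deg\phi_K(2,\cdot)$; showing that some resultant ``has absolute value $1$ precisely in the fibered cases'' is both stronger than necessary and not obviously true.
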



As mentioned above, the holonomy representation $\rho_0$ is parabolic, 
so that Theorem~\ref{thm:DFJ-conjecture} 
is an immediate corollary of the following theorem. 

\begin{theorem} \label{thm:main-theorem}

Suppose $\rho:G_K\to SL_2(\BC)$ is a parabolic representation of $K=J(k,2n)$. 
Then we have 
\begin{enumerate}
\item
$\Delta_{K,\rho}(t)$ determines the genus of $J(k,2n)$, and

\item
$\Delta_{K,\rho}(t)$ determines the fiberedness of $J(k,2n)$ if $k=2m+1$, or $k=2$ (twist knot), or $k=2m$ and $|4mn-1| \in \CP_2$.
\end{enumerate}
\end{theorem}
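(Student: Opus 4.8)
The plan is to compute the twisted Alexander polynomial $\Delta_{K,\rho}(t)$ explicitly for the double twist knots $J(k,2n)$ and then read off genus and fiberedness from its leading and top coefficients. I'll begin by recalling a two-generator, one-relator presentation of the knot group $G_K = \langle a, b \mid wa = bw \rangle$, where $a,b$ are meridians and $w$ is the relator word determined by the continued-fraction data of $J(k,2n)$. A parabolic representation $\rho$ sends each meridian to a parabolic element of $SL_2(\BC)$; after conjugation I may assume $\rho(a) = \bigl(\begin{smallmatrix} 1 & 1 \\ 0 & 1 \end{smallmatrix}\bigr)$ and $\rho(b) = \bigl(\begin{smallmatrix} 1 & 0 \\ -u & 1 \end{smallmatrix}\bigr)$ for a single complex parameter $u$, which is constrained to be a root of the \emph{Riley polynomial} of $K$. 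By Wada's formula the twisted Alexander polynomial is the ratio $\Delta_{K,\rho}(t) = \det\Phi(\partial w/\partial a \cdot (b-1) - \partial w/\partial b\cdot(a-1))/\det\Phi(b-1)$ built from the Fox derivatives of the relator, so the whole computation reduces to understanding the image under $\Phi = (t\otimes\rho)$ of a specific word.

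The next and central step is to obtain a closed-form expression for $\Delta_{K,\rho}(t)$ as a polynomial whose coefficients are polynomials in $u$ and $t$. Here the structure of $w$ as an iterated product governed by the parameters $k$ and $n$ should let me set up a linear recursion, most naturally phrased in terms of Chebyshev-like polynomials in the trace variables, and solve it. The degree in $t$ will match the expected genus bound, so part (1) will follow by showing that the $t$-degree of $\Delta_{K,\rho}(t)$ always equals $2g(J(k,2n))$ independently of which parabolic $\rho$ (which root $u$) one chooses; this is where I expect the explicit form to make the genus-detection transparent, since $2$-bridge knots have well-understood genus from their continued fractions. For part (2), fiberedness, I would test whether $\Delta_{K,\rho}(t)$ is \emph{monic}, i.e.\ whether its top-degree coefficient is a unit; the Alexander-polynomial criterion guarantees fiberedness of $J(k,2n)$ iff its classical polynomial is monic, so the real content is comparing the leading coefficient of the twisted polynomial against that known answer.

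The main obstacle, and the reason for the arithmetic hypotheses in the statement, lives in the leading coefficient in the case $k=2m$ with $k$ even. In the odd case $k=2m+1$ and the twist-knot case $k=2$ the top coefficient should come out to be (up to sign and powers of $t$) a constant $\pm 1$, or an expression that is forced to be a unit by the Riley-polynomial relation, making the monic/non-monic dichotomy fall out cleanly. But for even $k$ the leading coefficient will be some polynomial expression $f(u)$ in the parabolic parameter $u$, and to decide monicity I must show $f(u)$ is (or is not) a unit \emph{for every} root $u$ of the Riley polynomial. The hypothesis $|4mn-1|\in\CP_2$ — that $2$ is a primitive root modulo this prime — is exactly the number-theoretic input that lets me control the factorization of the relevant resultant or the irreducibility of the Riley polynomial over a suitable field, thereby guaranteeing the leading coefficient is a unit precisely when the classical Alexander polynomial is monic. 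So the hard part will be the algebraic-number-theory argument: relating the primitivity of $2$ modulo $|4mn-1|$ to the behavior of $f(u)$ across all Galois conjugate roots $u$, and thereby matching the twisted criterion to the classical fibering criterion. Everything else reduces to the recursive computation, which I would carry out but not belabor here.
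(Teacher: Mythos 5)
Your overall strategy matches the paper's: use the presentation $G_K=\langle a,b\mid w^na=bw^n\rangle$, compute $\Delta_{K,\rho}(t)$ via Fox calculus and Chebyshev-type recursions, and decide genus and fiberedness from the extreme coefficients, with the hypothesis $|4mn-1|\in\CP_2$ entering through irreducibility of the Riley polynomial $\phi_K(2,y)$ over $\BZ$ (proved by reducing mod $2$, where $\phi_K(2,y)$ becomes $S_d(y)+S_{d-1}(y)$, essentially the $p$-th cyclotomic polynomial, irreducible over $\BZ_2$ exactly when $2$ is a primitive root mod $p$). However, there are concrete gaps. First, for part (1) the issue is not merely that the generic $t$-degree is $4g-2$ (not $2g$ as you write), but that the top and bottom coefficients --- explicit polynomials such as $\frac{T_n(\lambda)-2}{\lambda-2}\cdot\frac{T_m(y)-2}{y-2}$ --- might vanish at a root of the Riley polynomial, dropping the degree. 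The paper devotes several lemmas to proving $\gcd\bigl(\phi_K(2,y),\frac{T_n(\lambda)-2}{\lambda-2}\bigr)=1$ and $\gcd\bigl(\phi_K(2,y),\frac{T_m(y)-2}{y-2}\bigr)=1$ by analyzing the roots-of-unity loci $T_n(\lambda)=2$ and $T_m(y)=2$ and exploiting reality and integrality of $x=2$; your sketch does not address this possibility at all, and it is the bulk of the work for (1).

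Second, for part (2) in the odd case $k=2m+1$ with $m>1$, the knot is \emph{not} fibered, so you must show the leading coefficient $\frac{T_m(y)-2}{y-2}$ never equals $1$ at a parabolic root --- the opposite of your expectation that it ``should come out to be $\pm1$ or forced to be a unit.'' The paper's mechanism is a separate fact (its Proposition 3.2, via an identity for $\alpha_k^2-\alpha_k\lambda+1$): every real root $y$ of $\phi_K(2,y)=0$ satisfies $y>2$, while all solutions of $T_m(y)=y$ with $y\neq 2$ are real and $<2$; without this, or some substitute, the odd case does not close. For the even case $k=2m$, $m>1$, your Galois-orbit idea is morally the paper's argument, but the clean finish is a degree count: the leading coefficient $h(y)$ has degree $2mn-(m+1)<2mn-1=\frac{p-1}{2}=\deg\phi_K(2,y)$, so irreducibility forces $\phi_K(2,y)\nmid h(y)-1$ and hence $h(y)\neq 1$ at every root; here too the conclusion to be proved is \emph{non}-monicity, since $J(2m,2n)$ with $m>1$ is never fibered.
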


\begin{remark}
(1) Suppose $k=2m$ and $n>0$. Then $4mn-1 \in \CP_2$ if $4mn-1$ is a prime and $2mn-1$ is a prime $\equiv 1 \pmod{4}$.

(2) It is known that the conjugacy classes of parabolic representations into $SL_2(\BC)$ of the knot $J(2m,2n)$ can be described as the zero locus of an integral polynomial in one variable. The condition $|4mn-1| \in \CP_2$ in Theorem~\ref{thm:main-theorem} (hence Theorem~\ref{thm:DFJ-conjecture}) assures the irreducibility over $\BZ$ of this polynomial, see Section \ref{condition}. We do not know whether Theorem~\ref{thm:main-theorem} (2) holds true for every integers $m$ and $n$.    
\label{remark}
\end{remark}

This paper is organized as follows. 
In Section 2, 
we study non-abelian representations of the knot $J(k,2n)$ 
and give an explicit formula of the defining equation of the representation space. 
In Section 3, we investigate parabolic representations of $J(k,2n)$. 
In Section 4, we quickly review the definition of the twisted Alexander polynomial 
and some related work on fibering and genus of knots. 
In particular, we calculate the coefficients 
of the highest and lowest degree terms of the twisted Alexander polynomial associated to a non-abelian representation of $J(k,2n)$ 
 and give the proof 
of Theorem~\ref{thm:main-theorem} (1). 
In Section 5, we discuss the fibering problem and prove Theorem~\ref{thm:main-theorem} (2). 

\section{Non-abelian representations}

\label{nonab}

Let $K=J(k,l)$ be the knot as in Figure 1. 
Note that $J(k,l)$ is a knot if and only if $kl$ is even, and is the trivial knot if $kl=0$. 
Furthermore, $J(k,l)\cong J(l,k)$ and 
$J(-k,-l)$ is the mirror image of $J(k,l)$. Hence, in the following, we consider $K=J(k,2n)$ for $k>0$ and $|n|>0$. 
When $k=2$, $J(2,2n)$ presents the twist knot. 

In this section we explicitly calculate the defining equation of the non-abelian representation space of $J(k,2n)$.

By \cite{HS} the knot 
group of $K=J(k,2n)$ is presented by $G_K = \la a,b~|~w^na=bw^n \ra$, 
where
$$w = 
\begin{cases} 
(ba^{-1})^m(b^{-1}a)^m, & k=2m,\\
(ba^{-1})^mba(b^{-1}a)^m, & k=2m+1.
\end{cases}$$

\begin{figure}
\setlength{\unitlength}{0.09mm}
\thicklines{
\begin{picture}(300,460)(80,-20)
\put(0,0){\line(0,1){440}}
\put(0,0){\line(1,0){125}}
\put(100,100){\line(1,0){25}}
\put(100,100){\line(0,1){125}}
\put(125,-25){\line(1,0){150}}
\put(125,-25){\line(0,1){150}}
\put(125,125){\line(1,0){150}}
\put(275,-25){\line(0,1){150}}
\put(100,225){\line(1,0){50}}
\put(150,225){\line(0,1){25}}
\put(125,250){\line(1,0){150}}
\put(125,250){\line(0,1){150}}
\put(125,400){\line(1,0){150}}
\put(275,400){\line(0,-1){150}}
\put(0,440){\line(1,0){150}}
\put(150,440){\line(0,-1){40}}
\put(190,30){{\large$l$}}
\put(185,310){\large{$k$}}
\put(250,400){\line(0,1){40}}
\put(250,440){\line(1,0){150}}
\put(400,440){\line(0,-1){440}}
\put(275,0){\line(1,0){125}}
\put(250,225){\line(1,0){50}}
\put(250,225){\line(0,1){25}}
\put(300,100){\line(0,1){125}}
\put(300,100){\line(-1,0){25}}
\end{picture}
}
\caption{The knot $K=J(k,l)$. Here $k>0$ and $l=2n~(n \in\BZ)$ denote 
the numbers of half twists in each box. Positive numbers correspond 
to right-handed twists and negative numbers correspond to left-handed 
twists respectively. }
\end{figure}
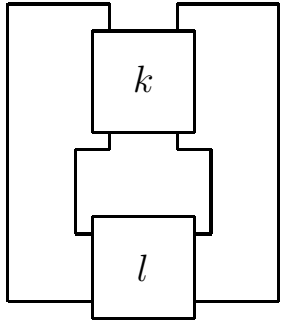

Let $\{S_i(z)\}_i$ be the sequence of Chebyshev polynomials defined by $S_0(z)=1,\,S_1(z)=z$, and $S_{i+1}(z)=zS_i(z)-S_{i-1}(z)$ for all integers $i$.

The following lemmas are standard, see e.g \cite[Lemma 2.3]{Tr} and \cite[Lemma 2.2]{Tr2}.

\begin{lemma}
One has $S^2_i(z)-zS_i(z)S_{i-1}(z)+S^2_{i-1}(z)=1$.
\label{S}
\end{lemma}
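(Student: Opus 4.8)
The plan is to recognise the left-hand side as a conserved quantity of the defining recurrence. Writing
\[
f(i) = S_i^2(z) - z\, S_i(z) S_{i-1}(z) + S_{i-1}^2(z),
\]
I would show that $f(i+1) = f(i)$ for every integer $i$, so that $f$ is constant on $\BZ$ and is pinned down by its value at a single index.

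For the base value, a direct computation using $S_0 = 1$ and $S_1 = z$ gives $f(1) = z^2 - z\cdot z \cdot 1 + 1 = 1$.

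The inductive step is the heart of the argument. Substituting $S_{i+1} = z S_i - S_{i-1}$ into $f(i+1) = S_{i+1}^2 - z S_{i+1} S_i + S_i^2$ and expanding, the $z^2 S_i^2$ terms produced by $S_{i+1}^2$ and by $-z S_{i+1} S_i$ cancel, and the remaining cross terms collapse to leave exactly $S_i^2 - z S_i S_{i-1} + S_{i-1}^2 = f(i)$. Crucially, this manipulation uses only the recurrence at index $i$, which is given for all integers $i$; hence $f(i+1) = f(i)$ holds at every index, the doubly infinite sequence $f$ is constant, and combining this with $f(1) = 1$ yields the identity for all $i \in \BZ$.

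There is no genuine obstacle here, since the result reduces to an elementary induction. The only point meriting attention is that the identity is asserted for every integer $i$, so one should note that the shift-invariance $f(i+1) = f(i)$ is itself valid at every index and not merely for $i \geq 1$; this is immediate because the Chebyshev recurrence is stated for all integers. As a consistency check one could instead pass to the closed form $S_i(2\cos\theta) = \sin((i+1)\theta)/\sin\theta$ and reduce the claim to $\sin^2\theta = \sin^2\theta$ via product-to-sum formulas, and since both sides are polynomials in $z$, agreement on the dense set $\{\,2\cos\theta\,\}$ forces the polynomial identity.
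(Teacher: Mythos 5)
Your proof is correct and complete. Note that the paper itself gives no proof of this lemma --- it simply declares it ``standard'' and cites \cite{Tr} and \cite{Tr2} --- so there is nothing to compare against line by line; your argument is the natural one and is essentially what those references do. The computation in your inductive step checks out: substituting $S_{i+1}=zS_i-S_{i-1}$ gives $S_{i+1}^2 - zS_{i+1}S_i = -zS_iS_{i-1}+S_{i-1}^2$, so $f(i+1)=f(i)$, and since this identity is symmetric under rewriting the recurrence as $S_{i-1}=zS_i-S_{i+1}$, the shift-invariance genuinely holds for all $i\in\BZ$, which is exactly the point you rightly flag (the paper does use the lemma with shifted indices such as $S_{m-1},S_{m-2}$ for all $m$, so covering negative indices matters). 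Your closing remark about the closed form $S_i(2\cos\theta)=\sin((i+1)\theta)/\sin\theta$ is a fine sanity check but is not needed; the conserved-quantity induction already settles the polynomial identity.
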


\begin{lemma}
Suppose the sequence $\{M_i\}_i$ of $2 \times 2$ matrices satisfies 
the recurrence relation $M_{i+1}=zM_i-M_{i-1}$ for all integers $i$. Then 
\begin{eqnarray}
M_i &=& S_{i-1}(z)M_1-S_{i-2}(z)M_0 \label{1}\\
&=&S_i(z)M_0-S_{i-1}(z)M_{-1} \label{-1}.
\end{eqnarray}
\end{lemma}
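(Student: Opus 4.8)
The plan is to prove both identities by a double (two-sided) induction on the integer $i$, since the statement must hold for all integers, positive and negative. The engine is the shared three-term recurrence: the matrices $M_i$ and the scalar Chebyshev polynomials $S_i(z)$ obey the \emph{same} relation $X_{i+1}=zX_i-X_{i-1}$. Consequently any $\BZ$-linear combination of $M_0$ and $M_1$ (or of $M_0$ and $M_{-1}$) whose coefficients are propagated by this recurrence is completely determined once it is shown to agree with $M_i$ at two consecutive indices.

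First I would pin down the two negative-index Chebyshev values that enter the base cases. Running the defining recurrence $S_{i+1}=zS_i-S_{i-1}$ backwards from $S_0(z)=1$ and $S_1(z)=z$ gives $S_{-1}(z)=0$ and $S_{-2}(z)=-1$. With these in hand, the first identity $M_i=S_{i-1}(z)M_1-S_{i-2}(z)M_0$ is immediate at $i=1$ (it reads $M_1=M_1$) and at $i=0$ (it reads $M_0=M_0$, using $S_{-1}=0$ and $S_{-2}=-1$).

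For the inductive step I would assume the first identity at two consecutive indices $i-1$ and $i$, and compute $M_{i+1}=zM_i-M_{i-1}$ by substituting both hypotheses. Collecting the coefficients of $M_1$ and $M_0$ produces precisely $zS_{i-1}(z)-S_{i-2}(z)=S_i(z)$ and $zS_{i-2}(z)-S_{i-3}(z)=S_{i-1}(z)$, so the identity holds at $i+1$. Running the \emph{same} computation with the recurrence rewritten as $M_{i-1}=zM_i-M_{i+1}$ propagates the identity to smaller indices, completing the two-sided induction and establishing \eqref{1}.

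Finally, rather than repeat the argument, I would obtain the second identity \eqref{-1} by reindexing. Setting $M'_j:=M_{j-1}$ yields a sequence satisfying the same recurrence $M'_{j+1}=zM'_j-M'_{j-1}$, so applying \eqref{1} to $\{M'_j\}_j$ gives $M_{i-1}=S_{i-1}(z)M_0-S_{i-2}(z)M_{-1}$; replacing $i$ by $i+1$ then gives $M_i=S_i(z)M_0-S_{i-1}(z)M_{-1}$, which is exactly \eqref{-1}. There is no genuine obstacle here: the only points demanding care are that the induction must run in \emph{both} directions from the base cases and that the Chebyshev recurrence be applied at negative subscripts. Note in particular that Lemma~\ref{S} is not needed for this computation; it records a separate quadratic identity used elsewhere.
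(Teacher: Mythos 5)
Your proof is correct. The paper itself gives no argument for this lemma, merely labelling it ``standard'' and citing \cite[Lemma 2.3]{Tr} and \cite[Lemma 2.2]{Tr2}; your two-sided induction (with the base values $S_{-1}(z)=0$, $S_{-2}(z)=-1$ and the reindexing trick to deduce \eqref{-1} from \eqref{1}) is exactly the standard argument those references supply, and all the details check out.
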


A representation $\rho:G_K\to SL_2(\BC)$ is called non-abelian if 
$\rho(G_K)$ is a non-abelian subgroup of $SL_2(\BC)$. 
Taking conjugation if necessary, we can assume that $\rho$ 
has the form
\begin{equation}
\rho(a)=A=\left[ \begin{array}{cc}
s & 1\\
0 & s^{-1} \end{array} \right] \quad \text{and} \quad \rho(b)=B=\left[ \begin{array}{cc}
s & 0\\
2-y & s^{-1} \end{array} \right]
\label{nonabelian}
\end{equation}
where $(s,y) \in \BC^* \times \BC$ satisfies the matrix equation $W^nA-BW^n=0$. 
Here $W=\rho(w)$. It can be easily checked that $y=\tr AB^{-1}$ holds.
Let $x=\tr A=\tr B=s+s^{-1}$.


\begin{lemma}
One has $$WA-BW=\left[ \begin{array}{cc}
0 & \alpha_{k}(x,y)\\
(y-2)\alpha_{k}(x,y) & 0 \end{array} \right]$$ where
$$\alpha_{k}(x,y)=
\begin{cases}
1-(y+2-x^2)S_{m-1}(y) \left( S_{m-1}(y) - S_{m-2}(y) \right), & k=2m,\\
1+(y+2-x^2)S_{m-1}(y) \left( S_m(y) - S_{m-1}(y) \right), & k=2m+1.
\end{cases}
$$
\label{wabw}
\end{lemma}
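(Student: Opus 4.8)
The plan is to write $W=\rho(w)$ with entries $W_{ij}$ and expand $WA-BW$ using the explicit shape of $A=\rho(a)$ and $B=\rho(b)$ from \eqref{nonabelian}. First I would carry out this multiplication: the $(1,1)$-entry of $WA-BW$ cancels identically, the $(2,2)$-entry equals $W_{21}-(2-y)W_{12}$, the $(1,2)$-entry equals $W_{11}+(s^{-1}-s)W_{12}$, and the $(2,1)$-entry equals $(s-s^{-1})W_{21}-(2-y)W_{11}$. Consequently the lemma follows from two claims: (a) the symmetry $W_{21}=(2-y)W_{12}$, and (b) the identity $W_{11}+(s^{-1}-s)W_{12}=\alpha_k(x,y)$. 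Indeed, (a) makes the $(2,2)$-entry vanish, and substituting (a) into the $(2,1)$-entry turns it into $-(2-y)\bigl(W_{11}+(s^{-1}-s)W_{12}\bigr)$, which by (b) is exactly $(y-2)\alpha_k(x,y)$; together with (b) for the $(1,2)$-entry this is the asserted matrix.

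I would prove (a) conceptually, without computing $W$. Set $D=\mathrm{diag}(1,2-y)$ and define $\psi(M)=DM^{T}D^{-1}$; since transposition reverses products, $\psi$ is an anti-homomorphism of $SL_2(\BC)$. A one-line check from \eqref{nonabelian} gives $\psi(A)=B$ and $\psi(B)=A$, whence $\psi(A^{-1})=B^{-1}$, $\psi(B^{-1})=A^{-1}$, and $\psi(BA)=BA$. In particular $\psi$ sends $BA^{-1}\mapsto B^{-1}A$ and $B^{-1}A\mapsto BA^{-1}$, so applying the anti-homomorphism $\psi$ to $w$ reverses the word and interchanges its two halves: in both cases $k=2m$ and $k=2m+1$ one finds $\psi(W)=W$. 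This identity reads $W^{T}=D^{-1}WD$, whose off-diagonal entries are precisely $W_{21}=(2-y)W_{12}$, i.e. claim (a).

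It then remains to prove (b), for which I would compute the top row of $W$ explicitly. Writing $P=BA^{-1}$ and $Q=B^{-1}A$, a short calculation gives $\tr P=\tr Q=y$ and $\det P=\det Q=1$, so each satisfies $M^2=yM-I$. By the matrix recurrence of \eqref{1} this yields $P^{m}=S_{m-1}(y)P-S_{m-2}(y)I$ and $Q^{m}=S_{m-1}(y)Q-S_{m-2}(y)I$. Expanding $W=P^{m}Q^{m}$ when $k=2m$ and $W=P^{m}(BA)Q^{m}$ when $k=2m+1$ then expresses $W_{11}$ and $W_{12}$ as polynomials in $S_{m-1}(y)$, $S_{m-2}(y)$ and $s^{\pm1}$. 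Forming $W_{11}+(s^{-1}-s)W_{12}$, eliminating $s$ through $s^{2}+s^{-2}=x^{2}-2$, and finally applying Lemma~\ref{S} in the form $S_{m-1}^2(y)-yS_{m-1}(y)S_{m-2}(y)+S_{m-2}^2(y)=1$, I expect the expression to collapse to $1-(y+2-x^2)S_{m-1}(y)\bigl(S_{m-1}(y)-S_{m-2}(y)\bigr)$, and analogously to the stated $\alpha_{2m+1}(x,y)$ in the odd case.

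The conceptual crux is the symmetry in the second step: it removes any need to compute $W_{21}$ or $W_{22}$, or to check the lower row of $WA-BW$ directly, cutting the work roughly in half. The main obstacle is then the bookkeeping in the last step. The raw form of $W_{11}+(s^{-1}-s)W_{12}$ is a polynomial in $S_{m-1}(y),S_{m-2}(y)$ with coefficients in $s$ and $x$, and it only acquires the compact shape $\alpha_k(x,y)$ after the substitution $s^{2}+s^{-2}=x^{2}-2$ and one application of the Chebyshev identity of Lemma~\ref{S}; keeping the two parities apart and correctly threading the extra central factor $BA$ through the odd case is where the computation must be done with care.
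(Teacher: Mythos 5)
Your proposal is correct, and the symmetry step is a genuinely different (and slicker) device than what the paper does. The paper's proof expands $WA$ and $BW$ separately via the Chebyshev recurrence \eqref{1} (resp.\ \eqref{-1}) applied twice, writes each as an $S$-weighted combination of short words in $A,B$, and then checks all the entries of the difference by direct matrix multiplication. You instead observe that the entrywise form of $WA-BW$ reduces everything to the two scalar claims (a) $W_{21}=(2-y)W_{12}$ and (b) $W_{11}+(s^{-1}-s)W_{12}=\alpha_k(x,y)$, and you dispose of (a) conceptually: the anti-involution $\psi(M)=DM^TD^{-1}$ with $D=\mathrm{diag}(1,2-y)$ swaps $A\leftrightarrow B$ and fixes $W$ because the words $w$ are palindromic up to exchanging $a$ and $b$ -- I checked that $\psi(BA^{-1})=B^{-1}A$ and $\psi(BA)=BA$ do give $\psi(W)=W$ in both parities. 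This halves the computation and makes the antisymmetric shape of $WA-BW$ (zero diagonal, $(2,1)$-entry equal to $(y-2)$ times the $(1,2)$-entry) a structural fact rather than a computational accident; the paper gets the same shape only as the outcome of the full expansion. Two small points to tidy up: $D$ is singular at $y=2$, so you should either note that $W_{21}=(2-y)W_{12}$ is a polynomial identity in $y$ (hence extends from $y\neq 2$), or observe that at $y=2$ both $A$ and $B$ are upper triangular so $W_{21}=0$ trivially; and claim (b) is only sketched (``I expect the expression to collapse''), though the computation you describe -- expand $P^m,Q^m$ by \eqref{1}, substitute $s^2+s^{-2}=x^2-2$, and apply Lemma~\ref{S} once -- is exactly the top-row half of the paper's calculation and does close up (e.g.\ for $m=1$, $k=2$ one gets $W_{11}+(s^{-1}-s)W_{12}=x^2-y-1=\alpha_2(x,y)$). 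So the residual work in (b) is the same Chebyshev bookkeeping as in the paper; what your approach buys is the elimination of the lower row.
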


\begin{proof} We first note that, by the Cayley-Hamilton theorem, $M^{i+1}=(\tr M) M^i - M^{i-1}$ for all matrices $M \in SL_2(\BC)$ and all integers $i$.

If  $k=2m$ then by applying \eqref{1} twice, we have
\begin{eqnarray*}
WA&=& (BA^{-1})^m(B^{-1}A)^mA \\
&=& S^2_{m-1}(y) BA^{-1}B^{-1}AA -S_{m-1}(y)S_{m-2}(y)(BA^{-1}A+B^{-1}AA)+S^2_{m-2}(y)A.
\end{eqnarray*}
Similarly,
\begin{eqnarray*}
BW &=& B(BA^{-1})^m(B^{-1}A)^m \\
&=& S^2_{m-1}(y) BBA^{-1}B^{-1}A  -S_{m-1}(y)S_{m-2}(y)(BBA^{-1}+BB^{-1}A) + S^2_{m-2}(y)B.
\end{eqnarray*}
Hence, by direct calculations using \eqref{nonabelian}, we obtain
\begin{eqnarray*}
WA-BW &=& S^2_{m-1}(y) (BA^{-1}B^{-1}AA-BBA^{-1}B^{-1}A)+ S^2_{m-2}(y)(A-B) \\
&& - \, S_{m-1}(y)S_{m-2}(y)(BA^{-1}A-BBA^{-1}+B^{-1}AA-BB^{-1}A) \\
&=& \left[ \begin{array}{cc}
0 & \alpha_{k}(x,y)\\
(y-2)\alpha_{k}(x,y) & 0 \end{array} \right] 
\end{eqnarray*}
where  
$$
\alpha_{k}(x,y)= (s^{-2}+1+s^2-y)S^2_{m-1}(y)-(s^{-2}+s^2)S_{m-1}(y)S_{m-2}(y)+S^2_{m-2}(y).
$$
Since $S^2_{m-1}(y)-yS_{m-1}(y)S_{m-2}(y)+S^2_{m-2}(y)=1$ (by Lemma \ref{S}) and $x=s+s^{-1}$, 
$$\alpha_k(x,y)=1-(y+2-x^2)S_{m-1}(y) \left( S_{m-1}(y) - S_{m-2}(y) \right).$$

If $k=2m+1$ then by applying \eqref{-1} twice, we have 
\begin{eqnarray*}
WA&=&  (BA^{-1})^mBA(B^{-1}A)^mA \\
&=& S^2_{m}(y)BAA-S_{m}(y)S_{m-1}(y) ( (BA^{-1})^{-1}BAA+BA(B^{-1}A)^{-1}A) \\
&&+ \, S^2_{m-1}(y) (BA^{-1})^{-1}BA(B^{-1}A)^{-1}A\\
&=& S^2_{m}(y) BAA - S_{m}(y)S_{m-1}(y) (A^3+B^2A) +S^2_{m-1}(y) ABA.
\end{eqnarray*}
Similarly,
\begin{eqnarray*}
BW &=& B(BA^{-1})^mBA(B^{-1}A)^m  \\
&=& S^2_{m}(y)BBA-S_{m}(y)S_{m-1}(y) (B(BA^{-1})^{-1}BA+BBA(B^{-1}A)^{-1}) \\
&&+ \, S^2_{m-1}(y) B(BA^{-1})^{-1}BA(B^{-1}A)^{-1}\\
&=& S^2_{m}(y) BBA - S_{m}(y)S_{m-1}(y) (BA^2+B^3) +S^2_{m-1}(y) BAB.
\end{eqnarray*}
Hence, by direct calculations using \eqref{nonabelian}, we obtain
\begin{eqnarray*}
WA-BW &=& S^2_{m}(y) (BAA-BBA)+ S^2_{m-1}(y)(ABA-BAB) \\
&& - \, S_{m}(y)S_{m-1}(y)(A^3-BA^2+B^2A-B^2) \\
&=& \left[ \begin{array}{cc}
0 & \alpha_{k}(x,y)\\
(y-2)\alpha_{k}(x,y) & 0 \end{array} \right] 
\end{eqnarray*}
where 
\begin{eqnarray*}
\alpha_{k}(x,y) &=& S^2_{m}(y)-(s^{-2}+s^2)S_{m}(y)S_{m-1}(y)+(s^{-2}+1+s^{2}-y)S^2_{m-1}(y)\\
&=& 1+(y+2-x^2)S_{m-1}(y) \left( S_m(y) - S_{m-1}(y) \right).
\end{eqnarray*}
This completes the proof of Lemma \ref{wabw}.
\end{proof}

The proof of the following lemma is similar to that of Lemma \ref{wabw}.

\begin{lemma} One has
\begin{eqnarray*}
\tr W =
\begin{cases}
2+(y-2)(y+2-x^2)S^2_{m-1}(y), & k=2m,\\
x^2-y-(y-2)(y+2-x^2)S_m(y)S_{m-1}(y), & k=2m+1.
\end{cases}
\end{eqnarray*}
\label{trace}
\end{lemma}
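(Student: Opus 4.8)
The plan is to follow the same Chebyshev expansion used in the proof of Lemma~\ref{wabw}, except that now I only need to track traces rather than full matrices, which shortens the bookkeeping considerably. The starting observation is that $BA^{-1}$ and $B^{-1}A$ lie in $SL_2(\BC)$; using $\tr M=\tr M^{-1}$ for $M\in SL_2(\BC)$ together with cyclic invariance of the trace, one gets $\tr(BA^{-1})=\tr(B^{-1}A)=\tr(AB^{-1})=y$. Hence, by Cayley--Hamilton, the powers $(BA^{-1})^m$ and $(B^{-1}A)^m$ satisfy the recurrence $M_{i+1}=yM_i-M_{i-1}$, so they expand through the Chebyshev polynomials $S_i(y)$ exactly as in Lemma~\ref{wabw}.

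For $k=2m$ I would apply \eqref{1} to write $(BA^{-1})^m=S_{m-1}(y)BA^{-1}-S_{m-2}(y)I$ and $(B^{-1}A)^m=S_{m-1}(y)B^{-1}A-S_{m-2}(y)I$, multiply these out, and take the trace of $W=(BA^{-1})^m(B^{-1}A)^m$ term by term. This yields
\[
\tr W=S_{m-1}^2(y)\,\tr(BA^{-1}B^{-1}A)-2y\,S_{m-1}(y)S_{m-2}(y)+2S_{m-2}^2(y),
\]
so everything reduces to the single trace $\tr(BA^{-1}B^{-1}A)$. Computing it directly from \eqref{nonabelian} (or via the Fricke identity for the commutator $[B,A^{-1}]$) gives $\tr(BA^{-1}B^{-1}A)=2x^2+y^2-x^2y-2$, and substituting $2S_{m-2}^2(y)-2y\,S_{m-1}(y)S_{m-2}(y)=2-2S_{m-1}^2(y)$ from Lemma~\ref{S} collapses the expression to $2+(y-2)(y+2-x^2)S_{m-1}^2(y)$.

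For $k=2m+1$ the natural choice is \eqref{-1}, which gives $(BA^{-1})^m=S_m(y)I-S_{m-1}(y)AB^{-1}$ and $(B^{-1}A)^m=S_m(y)I-S_{m-1}(y)A^{-1}B$; inserting the central factor $BA$ and using $AB^{-1}\cdot BA=A^2$ and $A^2\cdot A^{-1}B=AB$ collapses $W$ to $S_m^2(y)\,BA-S_m(y)S_{m-1}(y)(A^2+B^2)+S_{m-1}^2(y)\,AB$. Taking traces and using the elementary identities $\tr(AB)=\tr(BA)=x^2-y$ and $\tr(A^2)=\tr(B^2)=x^2-2$, then applying Lemma~\ref{S} in the form $S_m^2(y)+S_{m-1}^2(y)=1+y\,S_m(y)S_{m-1}(y)$, reduces $\tr W$ to $x^2-y-(y-2)(y+2-x^2)S_m(y)S_{m-1}(y)$, as claimed.

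I do not expect a genuine obstacle here: the only real computation is the trace of a single short word in each case, and the rest is the same Chebyshev cancellation already carried out in Lemma~\ref{wabw}. The one place to be careful is the algebraic rearrangement matching the coefficient of the Chebyshev term to the factored form $(y-2)(y+2-x^2)$; this is routine but is exactly the step where Lemma~\ref{S} must be invoked so that the bare powers of $S_i(y)$ assemble into the stated closed form.
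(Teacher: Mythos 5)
Your argument is correct and is essentially the proof the paper has in mind: the paper omits the details, stating only that the proof is ``similar to that of Lemma~\ref{wabw}'', and your expansion of $(BA^{-1})^m$ and $(B^{-1}A)^m$ via \eqref{1} (resp.\ \eqref{-1}) followed by Lemma~\ref{S} is exactly that argument, streamlined by passing to traces immediately. All the trace values you quote check out, including $\tr(BA^{-1}B^{-1}A)=2x^2+y^2-x^2y-2$ and $\tr(AB)=x^2-y$, and both closed forms assemble correctly into the factor $(y-2)(y+2-x^2)$.
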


We are now ready to calculate the expression $W^nA-BW^n$ as follows. Let $\lambda=\tr W$.

\begin{proposition}
One has
$$
W^nA-BW^n = \left[ \begin{array}{cc}
0 & S_{n-1}(\lambda)\alpha_{k}(x,y)-S_{n-2}(\lambda)\\
(y-2) \left( S_{n-1}(\lambda)\alpha_{k}(x,y)-S_{n-2}(\lambda) \right) & 0 \end{array} \right].
$$
\label{Riley}
\end{proposition}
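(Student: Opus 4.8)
The plan is to linearize $W^n$ via the Cayley--Hamilton theorem and thereby reduce everything to the value of $WA-BW$ already computed in Lemma~\ref{wabw}. Since $W\in SL_2(\BC)$ with $\tr W=\lambda$, the Cayley--Hamilton relation gives $W^{i+1}=\lambda W^i-W^{i-1}$ for all integers $i$, so the sequence $\{W^i\}_i$ satisfies precisely the recurrence hypothesized in the lemma that produced \eqref{1} and \eqref{-1}. Applying \eqref{1} with $M_i=W^i$, $M_1=W$, and $M_0=I$, I obtain the key identity
\[
W^n=S_{n-1}(\lambda)\,W-S_{n-2}(\lambda)\,I,
\]
valid for every $n\in\BZ$ because the Chebyshev polynomials $S_i$ are defined for all integer indices.

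Next I would substitute this expression into $W^nA-BW^n$ and distribute. Because $S_{n-1}(\lambda)$ and $S_{n-2}(\lambda)$ are scalars, they commute with $A$ and $B$, so the expression collapses to
\[
W^nA-BW^n = S_{n-1}(\lambda)\,(WA-BW)-S_{n-2}(\lambda)\,(A-B).
\]
The matrix $WA-BW$ is supplied verbatim by Lemma~\ref{wabw}, while $A-B$ is computed directly from the explicit forms in \eqref{nonabelian}, giving
\[
A-B=\left[\begin{array}{cc} 0 & 1 \\ y-2 & 0 \end{array}\right].
\]
Both of these matrices have the same antidiagonal shape, with lower-left entry equal to $(y-2)$ times the upper-right entry, so their scalar linear combination retains that shape and yields exactly the stated matrix, with upper-right entry $S_{n-1}(\lambda)\alpha_k(x,y)-S_{n-2}(\lambda)$.

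I expect no serious obstacle here: the argument is a clean linearization followed by a one-line substitution. The only point deserving a moment of care is that the identity for $W^n$ must hold for negative $n$ as well as positive $n$, since the proposition allows $n\in\BZ$; this is guaranteed by the fact that the recurrence $S_{i+1}(z)=zS_i(z)-S_{i-1}(z)$ defines $S_i$ for all integers $i$ and that $W$ is invertible in $SL_2(\BC)$, so the Cayley--Hamilton recurrence propagates in both directions.
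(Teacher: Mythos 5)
Your proposal is correct and follows essentially the same route as the paper: the paper applies \eqref{1} directly to the sequence $M_i=W^iA-BW^i$ (which satisfies $M_{i+1}=\lambda M_i-M_{i-1}$), whereas you first linearize $W^n=S_{n-1}(\lambda)W-S_{n-2}(\lambda)I$ and then substitute, but both land immediately on the identity $W^nA-BW^n=S_{n-1}(\lambda)(WA-BW)-S_{n-2}(\lambda)(A-B)$ and conclude via Lemma~\ref{wabw} and the explicit form of $A-B$.
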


\begin{proof}
By applying \eqref{1} and Lemma \ref{wabw}, we have
\begin{eqnarray*}
W^nA-BW^n &=& S_{n-1}(\lambda)(WA-BW)-S_{n-2}(\lambda)(A-B)\\
&=&  S_{n-1}(\lambda) \left[ \begin{array}{cc}
0 & \alpha_{k}(x,y)\\
(y-2)\alpha_{k}(x,y) & 0 \end{array} \right] - S_{n-2}(\lambda) \left[ \begin{array}{cc}
0 & 1\\
y-2 & 0 \end{array} \right] .
\end{eqnarray*}
The proposition follows.
\end{proof}

Proposition \ref{Riley} implies that the assignment \eqref{nonabelian} gives a non-abelian representation 
$\rho: G_K \to SL_2(\BC)$ 
if and only if $(s,y) \in \BC^* \times \BC$ satisfies the equation 
$$\phi_{k,2n}(x,y):= S_{n-1}(\lambda)\alpha_{k}(x,y)-S_{n-2}(\lambda)=0$$
where $\alpha_k(x,y)$ and $\lambda=\tr W$ are given by the formulas in Lemmas \ref{wabw} and \ref{trace} respectively.

The polynomial $\phi_{k,2n}(x,y)$ is also known as the Riley polynomial \cite{Ri, Le93} of $J(k,2n)$.

\section{Parabolic representations}

A representation $\rho:G_K\to SL_2(\BC)$ is called parabolic if the meridian $\mu$ 
of $K$ is sent to a parabolic element (i.e. $\text{tr}\,\rho(\mu)=2$) of $SL_2(\BC)$ and 
$\rho(G_K)$ is non-abelian. 

Let $K=J(k,2n)$. In this section we will show that if $\rho: G_K \to SL_2(\BC)$ is a parabolic representation of the form $$
\rho(a)=A=\left[ \begin{array}{cc}
1 & 1\\
0 & 1 \end{array} \right] \quad \text{and} \quad \rho(b)=B=\left[ \begin{array}{cc}
1 & 0\\
2-y & 1 \end{array} \right]
$$ where $y$ is a real number satisfying the equation $\phi_{k,2n}(2,y)=0$, then $y>2$.

\begin{lemma}
Suppose $x=2$. Then 
$$\alpha^2_{k}(x,y)-\alpha_{k}(x,y)\lambda+1=
\begin{cases}
(y-2)^3 S^4_{m-1}(y), & k=2m,\\
(y-2) \left( (y-2)S_m(y)S_{m-1}(y)+1 \right)^2, & k=2m+1.
\end{cases}
$$
\label{al1}
\end{lemma}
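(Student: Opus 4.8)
The plan is to treat this as a direct polynomial identity in $y$, obtained by substituting $x=2$ into the explicit formulas for $\alpha_k(x,y)$ and $\lambda=\tr W$ supplied by Lemmas \ref{wabw} and \ref{trace}, and then collapsing the result with the Chebyshev identity of Lemma \ref{S}. The first step is simply to set $x^2=4$, so that the recurring factor $y+2-x^2$ becomes $y-2$. For $k=2m$ this yields
$$\alpha_{2m}(2,y)=1-(y-2)S_{m-1}(y)\bigl(S_{m-1}(y)-S_{m-2}(y)\bigr),\qquad \lambda=2+(y-2)^2S_{m-1}^2(y),$$
and analogously for $k=2m+1$ one gets $\alpha=1+(y-2)S_{m-1}(y)\bigl(S_m(y)-S_{m-1}(y)\bigr)$ and $\lambda=4-y-(y-2)^2S_m(y)S_{m-1}(y)$; from here everything is a polynomial in $y$.

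The second step is to expand $\alpha_k^2-\alpha_k\lambda+1$ mechanically. To keep the bookkeeping under control I would abbreviate the two relevant Chebyshev values, say $P=S_{m-1}(y)$, $Q=S_{m-2}(y)$ in the even case (and $P=S_m(y)$, $Q=S_{m-1}(y)$ in the odd case), and also write $u=y-2$. Grouping by powers of $u$, one checks that the constant term and the terms linear in $u$ cancel, leaving only quadratic and cubic contributions in $u$ whose coefficients are small polynomials in $P$ and $Q$.

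The crux — and the only place anything nontrivial happens — is the third step: applying Lemma \ref{S}. With $z=y$ and $u=y-2$ that identity rearranges to
$$(P-Q)^2=1+u\,PQ,$$
i.e. $(P-Q)^2-1=(y-2)\,PQ$, which is exactly the relation needed to eliminate the leftover squared factors. Substituting it once in each case causes the remaining expression to telescope into a perfect power: for $k=2m$ the coefficients combine into $(y-2)^3P^4=(y-2)^3S_{m-1}^4(y)$, and for $k=2m+1$ they assemble into $u(uPQ+1)^2=(y-2)\bigl((y-2)S_m(y)S_{m-1}(y)+1\bigr)^2$, matching the two branches of the claim.

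I expect no conceptual obstacle; the entire content is the observation that the quadratic form $\alpha^2-\alpha\lambda+1$ is governed by the single Chebyshev relation of Lemma \ref{S}. The only real risk is arithmetic: keeping the signs and cross-terms straight during the expansion, especially in the odd case where $\lambda=2-u-u^2PQ$ carries three terms. I would guard against this by organizing the computation as a comparison of the $u$-graded coefficients of the two sides rather than expanding everything into one flat polynomial.
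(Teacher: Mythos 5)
Your proposal is correct and follows essentially the same route as the paper: substitute $x=2$ into the formulas from Lemmas \ref{wabw} and \ref{trace}, expand $\alpha_k^2-\alpha_k\lambda+1$ directly, and collapse the result using the identity of Lemma \ref{S} (which the paper applies in the equivalent forms $S_{m-1}^2-yS_{m-1}S_{m-2}+S_{m-2}^2=1$ and $yS^3_{m-1}S_m=S^2_{m-1}(S^2_{m-1}+S^2_m-1)$ rather than your rearrangement $(P-Q)^2=1+(y-2)PQ$). Your bookkeeping in powers of $u=y-2$ is a harmless, slightly tidier organization of the same computation.
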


\begin{proof}
If $k=2m$ then $\alpha_k(x,y)=1-(y+2-x^2)S_{m-1}(y) \left( S_{m-1}(y) - S_{m-2}(y) \right)$ and $\lambda=2+(y-2)(y+2-x^2)S^2_{m-1}(y)$ by Lemmas \ref{wabw} and \ref{trace}. Hence, by direct calculations using $x=2$, we have
\begin{eqnarray*}
\alpha^2_{k}(x,y)-\alpha_{k}(x,y)\lambda+1 &=& \left(-1-S^2_{m-1}(y)+yS^2_{m-1}(y)-yS_{m-1}(y)S_{m-2}(y)+S^2_{m-2}(y) \right)  \\
&&  \times (y-2)^2 S^2_{m-1}(y).
\end{eqnarray*}
Since $S^2_{m-1}(y)-yS_{m-1}(y)S_{m-2}(y)+S^2_{m-2}(y)=1$ (by Lemma \ref{S}), we obtain
$$\alpha^2_{k}(x,y)-\alpha_{k}(x,y)\lambda+1=(y-2)^3S^4_{m-1}(y).$$

If $k=2m+1$ then $\alpha_k(x,y)=1+(y+2-x^2)S_{m-1}(y) \left( S_m(y) - S_{m-1}(y) \right)$ and $\lambda=x^2-y-(y-2)(y+2-x^2)S_m(y)S_{m-1}(y)$ by Lemmas \ref{wabw} and \ref{trace}. Hence, by direct calculations using $x=2$, we have
\begin{eqnarray*}
\alpha^2_{k}(x,y)-\alpha_{k}(x,y)\lambda+1 &=& (y-2) \big( 1+ (2-y)S^2_{m-1}(y)+(y-2)S^4_{m-1}(y)\\
&& + \, 2 (y-2)S_{m-1}(y)S_m(y)+(2-y)yS^3_{m-1}(y)S_m(y)\\
&& + \, (y^2-3y+2)S^2_{m-1}(y)S^2_m(y)\big)
\end{eqnarray*}
By replacing $yS^3_{m-1}(y)S_m(y)=S^2_{m-1}(y) \left( S^2_{m-1}(y)+S^2_{m}(y)-1\right)$ in the above equality, 
$$\alpha^2_{k}(x,y)-\alpha_{k}(x,y)\lambda+1=(y-2) \left( (y-2)S_m(y)S_{m-1}(y)+1 \right)^2$$
as claimed.
\end{proof}

\begin{proposition}
Suppose $y$ is a real number satisfying the equation $\phi_{k,2n}(2,y)=0$. Then $y>2$.
\label{al}
\end{proposition}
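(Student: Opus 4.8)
The plan is to argue by a sign analysis, reducing the equation $\phi_{k,2n}(2,y)=0$ to a positivity statement via the factorizations already computed in Lemma \ref{al1}. Write $\lambda=\tr W$, which is real whenever $y$ is real, and abbreviate $P=S_{n-1}(\lambda)$, $Q=S_{n-2}(\lambda)$. The defining equation then reads $P\,\alpha_k(2,y)=Q$.

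First I would dispose of the degenerate case $P=0$. If $S_{n-1}(\lambda)=0$, then $\phi_{k,2n}(2,y)=0$ collapses to $S_{n-2}(\lambda)=0$; but Lemma \ref{S} applied with $i=n-1$ and $z=\lambda$ gives $S^2_{n-1}(\lambda)-\lambda S_{n-1}(\lambda)S_{n-2}(\lambda)+S^2_{n-2}(\lambda)=1$, so $P$ and $Q$ cannot vanish simultaneously. Hence any real root must satisfy $P=S_{n-1}(\lambda)\neq 0$, and we may solve $\alpha_k(2,y)=Q/P$.

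The key observation is then to substitute this value into the quadratic expression of Lemma \ref{al1}. Using the same Chebyshev identity $P^2-\lambda PQ+Q^2=1$, I compute
\begin{equation*}
\alpha^2_k(2,y)-\alpha_k(2,y)\,\lambda+1=\frac{Q^2-\lambda PQ+P^2}{P^2}=\frac{1}{S^2_{n-1}(\lambda)}.
\end{equation*}
Since $\lambda$ is real, the right-hand side is a strictly positive real number. This identity — recognizing that the Riley equation turns the quantity of Lemma \ref{al1} into $1/S^2_{n-1}(\lambda)$ — is really the only substantive point; once it is in hand the rest is pure sign-chasing.

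Finally I would invoke Lemma \ref{al1}, which evaluates the left-hand side as $(y-2)^3S^4_{m-1}(y)$ when $k=2m$ and as $(y-2)\big((y-2)S_m(y)S_{m-1}(y)+1\big)^2$ when $k=2m+1$. In both cases this is $(y-2)$ times a manifestly nonnegative real factor, namely $(y-2)^2S^4_{m-1}(y)$ or $\big((y-2)S_m(y)S_{m-1}(y)+1\big)^2$ respectively. As this product equals the strictly positive number $1/S^2_{n-1}(\lambda)$, the factor $(y-2)$ is forced to be strictly positive, yielding $y>2$. I expect no genuine obstacle beyond the displayed identity; the clean factorizations of Lemma \ref{al1} handle the conclusion automatically.
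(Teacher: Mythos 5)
Your proposal is correct and follows essentially the same route as the paper: both use the Chebyshev identity $S^2_{n-1}(\lambda)-\lambda S_{n-1}(\lambda)S_{n-2}(\lambda)+S^2_{n-2}(\lambda)=1$ together with the Riley equation to deduce $\bigl(\alpha^2_k-\alpha_k\lambda+1\bigr)S^2_{n-1}(\lambda)=1$, hence $\alpha^2_k-\alpha_k\lambda+1>0$, and then conclude via the factorizations of Lemma \ref{al1}. Your explicit check that $S_{n-1}(\lambda)\neq 0$ is a small added care the paper leaves implicit, but the argument is the same.
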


\begin{proof}
Suppose $\phi_{k,2n}(x,y)=0$. Then $ S_{n-1}(\lambda)\alpha_k(x,y)=S_{n-2}(\lambda)$. Hence
\begin{eqnarray*}
1 &=& S^2_{n-1}(\lambda)-\lambda S_{n-1}(\lambda)S_{n-2}(\lambda)+S^2_{n-2}(\lambda)\\
&=& \left( \alpha^2_{k}(x,y)-\alpha_{k}(x,y)\lambda+1 \right) S^2_{n-1}(\lambda).
\end{eqnarray*}
If we also suppose that $x=2$ and $y$ is a real number, then the above equality implies that $\alpha^2_{k}(x,y)-\alpha_{k}(x,y)\lambda+1>0$. By Lemma \ref{al1}, we must have $y>2$.
\end{proof}

\section{Twisted Alexander polynomials}

In this section we explicitly calculate the coefficients 
of the highest and lowest degree terms of the twisted Alexander polynomial associated to a non-abelian representation of $J(k,2n)$ and give the proof 
of Theorem~\ref{thm:main-theorem} (1).

\subsection{Twisted Alexander polynomials}

For a knot group $G_K=\pi_1(S^3\backslash K)$, 
we choose and fix a Wirtinger presentation 
$$
G_K=
\langle a_1,\ldots,a_q~|~r_1,\ldots,r_{q-1}\rangle.
$$
Then 
the abelianization homomorphism 
$f:G_K\to H_1(S^3\backslash K,\BZ)
\cong {\BZ}
=\langle t
\rangle$ 
is given by 
$f(a_1)=\cdots=f(a_q)=t$. 
Here 
we specify a generator $t$ of $H_1(S^3\backslash K;\BZ)$ 
and denote the sum in $\BZ$ multiplicatively. 
Let us 
consider a linear representation 
$\rho:G_K\to SL_2(\BC)$. 

The maps $\rho$ and $f$ naturally induce two ring homomorphisms 
$\widetilde{\rho}: {\BZ}[G_K] \rightarrow M(2,{\BC})$ 
and $\widetilde{f}:{\BZ}[G_K]\rightarrow {\BZ}[t^{\pm1}]$ respectively, 
where ${\BZ}[G_K]$ is the group ring of $G_K$ 
and 
$M(2,{\BC})$ is the matrix algebra of degree $2$ over ${\BC}$. 
Then 
$\widetilde{\rho}\otimes\widetilde{f}$ 
defines a ring homomorphism 
${\BZ}[G_K]\to M\left(2,{\BC}[t^{\pm1}]\right)$. 
Let 
$F_q$ denote the free group on 
generators $a_1,\ldots,a_q$ and 
$\Phi:{\BZ}[F_q]\to M\left(2,{\BC}[t^{\pm1}]\right)$
the composition of the surjection 
${\BZ}[F_q]\to{\BZ}[G_K]$ 
induced by the presentation of $G_K$ 
and the map 
$\widetilde{\rho}\otimes\widetilde{f}:{\BZ}[G_K]\to M(2,{\BC}[t^{\pm1}])$. 

Let us consider the $(q-1)\times q$ matrix $M$ 
whose $(i,j)$-component is the $2\times 2$ matrix 
$$
\Phi\left(\frac{\partial r_i}{\partial a_j}\right)
\in M\left(2,{\BZ}[t^{\pm1}]\right),
$$
where 
${\partial}/{\partial a}$ 
denotes the free differential calculus. 
For 
$1\leq j\leq q$, 
let us denote by $M_j$ 
the $(q-1)\times(q-1)$ matrix obtained from $M$ 
by removing the $j$th column. 
We regard $M_j$ as 
a $2(q-1)\times 2(q-1)$ matrix with coefficients in 
${\BC}[t^{\pm1}]$. 
Then Wada's twisted Alexander polynomial 
of a knot $K$ 
associated to a representation $\rho:G_K\to SL_2({\BC})$ 
is defined to be a rational function 
$$
\Delta_{K,\rho}(t)
=\frac{\det M_j}{\det\Phi(1-a_j)} 
$$
and 
moreover well-defined 
up to a factor $t^{2n}~(n\in{\BZ})$. 
It is also known that if two representations $\rho,\rho'$ are conjugate, 
then $\Delta_{K,\rho}(t)=\Delta_{K,\rho'}(t)$ holds. 
See \cite{Wada94-1} and \cite{GKM05-1} for details. 

\begin{remark}
Let $\rho:G_K\to SL_2(\BC)$ be a non-abelian representation. 
\begin{enumerate}
\item
The twisted Alexander polynomial $\Delta_{K,\rho}(t)$ 
associated to $\rho$ is always a Laurent polynomial 
for any knot $K$ \cite{KM05-1}. 
\item
The twisted Alexander polynomial is reciprocal, 
i.e. $\Delta_{K,\rho}(t)=t^i\Delta_{K,\rho}(t^{-1})$ holds for
some $i\in \BZ$ (see \cite{HSW10-1} and \cite{FKK11-1}).
\item
If $K$ is a fibered knot, then $\Delta_{K,\rho}(t)$ is 
a monic polynomial for every non-abelian representation $\rho$ \cite{GKM05-1}. 
It is also known that the converse holds for alternating knots \cite[Remark~4.2]{KM}. 
\item
If $K$ is a knot of genus $g$, then $\deg(\Delta_{K,\rho}(t))\leq 4g-2$ \cite{FK06-1}. 
Moreover if $K$ is fibered, then the equality holds \cite{KM05-1}. 
\end{enumerate}
\label{genus}
\end{remark}

We say the twisted Alexander polynomial $\Delta_{K,\rho}(t)$ determines the knot genus 
$g(K)$ if $\deg(\Delta_{K,\rho}(t))=4g(K)-2$ holds. For a hyperbolic knot $K$, 
the hyperbolic torsion polynomial $\CT_K(t)$ is defined to be $\Delta_{K,\rho_0}(t)$ 
for the holonomy representation $\rho_0:G_K\to SL_2(\BC)$. 
We note that it is normalized so that $\CT_K(t)=\CT_K(t^{-1})$ holds.

\subsection{Proof of Theorem~\ref{thm:main-theorem} (1)} It is known that the genus of $J(k,2n)$, where $k>1$ and $|n| > 0$,  is $1$ if $k$ is even, and is $|n|$ if $k$ is odd. Moreover, the genus of $J(1,2n)$ (the $(2,2n-1)$-torus knot) is $n-1$ if $n>0$ and is $-n$ if $n<0$. 

We first consider the case $n>0$. Let $r=w^naw^{-n}b^{-1}$, where $w$ is as defined in Section \ref{nonab}. By direct calculations, we have 
\begin{equation}
\frac{\partial r}{\partial a} = w^n \left( 1+(1-a)(w^{-1}+\dots+w^{-n}) \frac{\partial w}{\partial a}\right)
\label{M2}
\end{equation}
where $\frac{\partial w}{\partial a} =$
$$ 
\begin{cases}
 -\left( ba^{-1}+\dots+(ba^{-1})^m \right)+(ba^{-1})^m \left( 1+b^{-1}a+\dots+(b^{-1}a)^{m-1})b^{-1} \right), & k=2m,\\
 -\left( ba^{-1}+\dots+(ba^{-1})^m \right)+(ba^{-1})^m b\left( 1+ab^{-1}+\dots+(ab^{-1})^{m}) \right), & k=2m+1.
 \end{cases}
 $$

Suppose $\rho: G_K \to SL_2(\BC)$ is a non-abelian representation given by \eqref{nonabelian}. Then the twisted Alexander polynomial of $K$ associated to $\rho$ is 
$$\Delta_{K,\rho}(t)=\det \Phi \left( \frac{\partial r}{\partial a} \right) \big/\det \Phi(1-b)=\det \Phi \left( \frac{\partial r}{\partial a} \right) \big/ (1-tx+t^2).$$


\subsubsection{\textbf{The case} $J(2m,2n), \, n>0$}\label{subsection:(2m,2n)n>0} 
From \eqref{M2} we have
$$
\det \Phi \left( \frac{\partial r}{\partial a} \right)=|I+(I-tA)(W^{-1}+\dots+W^{-n})V|
$$
where $I$ is the $2 \times 2$ identity matrix and
$$V=-(BA^{-1}+\dots+(BA^{-1})^m)+(BA^{-1})^m(I+B^{-1}A+\dots+(B^{-1}A)^{m-1})t^{-1}B^{-1}.$$
The following lemma follows easily.

\begin{lemma}
\begin{enumerate}
\item
The highest degree term of $\det \Phi \left( \frac{\partial r}{\partial a} \right)$ is 
$$|A(W^{-1}+\dots+W^{-n})( BA^{-1}+\dots+(BA^{-1})^m )|t^2.$$
\item
The lowest degree term of $\det \Phi \left( \frac{\partial r}{\partial a} \right)$ is
$$|(W^{-1}+\dots+W^{-n})(BA^{-1})^m(I+B^{-1}A+\dots+(B^{-1}A)^{m-1})B^{-1}|t^{-2}.$$
\end{enumerate}
\label{2m,2n}
\end{lemma}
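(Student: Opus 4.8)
The plan is to extract the top- and bottom-degree terms of the determinant
$$\det\Phi\!\left(\frac{\p r}{\p a}\right)=\bigl|\,I+(I-tA)(W^{-1}+\dots+W^{-n})V\,\bigr|$$
by tracking the powers of $t$ that can appear. First I would observe that $t$ enters this expression in exactly two places: through the factor $(I-tA)$, whose $A$-part carries $t^{+1}$, and through the lone factor $t^{-1}B^{-1}$ sitting at the end of the matrix $V$. Everything else ($I$, $W^{-j}$, and the $(BA^{-1})^i$ summands) is $t$-independent. So the matrix whose determinant we take is, schematically, $I + (\text{const} - tA\cdot\text{const})(\,\text{const} + t^{-1}\text{const}\,)$, and I would expand the product to see which monomials in $t$ the entries can contain.

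Next I would chase the highest power. Since the matrix is $2\times 2$, its determinant is a quadratic form in the entries, so the top possible degree is $t^{+2}$, achieved only by multiplying the $-tA$ part of $(I-tA)$ against the $t$-free part of $V$, namely $-(BA^{-1}+\dots+(BA^{-1})^m)$, in \emph{both} factors of the determinant's expansion. Concretely, the $t^2$-coefficient of $\det\Phi(\p r/\p a)$ is the determinant of the pure $t^2$-part of the matrix, which is $\bigl|\,(-tA)(W^{-1}+\dots+W^{-n})\bigl(-(BA^{-1}+\dots+(BA^{-1})^m)\bigr)\,\bigr|/t^2$; the two minus signs cancel and one is left with the stated expression $|A(W^{-1}+\dots+W^{-n})(BA^{-1}+\dots+(BA^{-1})^m)|\,t^2$. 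The key point making this clean is that the cross-terms ($I$ against $tA$-times-$V$, etc.) all have degree strictly below $2$ in $t$, so they cannot contribute to the $t^2$-coefficient of a $2\times2$ determinant.

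Symmetrically, for the lowest degree term I would pick out the unique $t^{-1}$ contribution, which comes from pairing the $t$-free part $I$ of $(I-tA)$ with the $t^{-1}B^{-1}$ tail of $V$. The only way to reach $t^{-2}$ in the determinant is to take this $t^{-1}$ piece in both factors of the $2\times 2$ expansion, giving $\bigl|\,(W^{-1}+\dots+W^{-n})(BA^{-1})^m(I+B^{-1}A+\dots+(B^{-1}A)^{m-1})B^{-1}\,\bigr|\,t^{-2}$, exactly as claimed. Again the mixed terms have $t$-degree at least $-1$, so they do not affect the bottom coefficient.

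The only real subtlety—and the part I would be careful about—is confirming that these extremal coefficients are genuinely nonzero (so that the true top and bottom degrees are $+2$ and $-2$ rather than something smaller), and that no cancellation occurs among the several matrix summands inside each determinant. For the bookkeeping of which term carries which power of $t$ the argument is essentially immediate, so the lemma is indeed ``easy'' as stated; the honest verification of nonvanishing is deferred to the subsequent explicit computation of these determinants, which is where the genus count $\deg\Delta_{K,\rho}=4g-2=2$ for $J(2m,2n)$ (genus $1$) will be read off.
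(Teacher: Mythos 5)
Your argument is correct and is precisely the degree-count the paper leaves implicit (the paper only remarks that the lemma ``follows easily''): since $W$ and $BA^{-1}$ carry no power of $t$, the matrix inside the determinant has the form $t^{-1}P_{-1}+P_0+tP_1$, and for a $2\times 2$ determinant the $t^{\pm 2}$ coefficients are exactly $\det P_{\pm1}$, which gives the two stated expressions after the signs cancel. Your closing caveat is also the right one: ``highest/lowest degree term'' here just means the $t^{\pm2}$ coefficient, and its nonvanishing at parabolic representations is exactly what Proposition \ref{prop1} and Lemmas \ref{twist} and \ref{2} establish afterwards.
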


 Let $\{T_i(z)\}_i$ be the sequence of Chebyshev polynomials defined by $T_0(z)=2,\,T_1(z)=z$ and $T_{i+1}(z)=zT_i(z)-T_{i-1}(z)$ for all integers $i$. Recall that $y=\tr AB^{-1}$ and $\lambda = \tr W$.

\begin{proposition}
The highest and lowest degree terms of $\det \Phi \left( \frac{\partial r}{\partial a} \right)$ are $\frac{T_n(\lambda)-2}{\lambda-2} \times  \frac{T_m(y)-2}{y-2} \, t^2$  and $\frac{T_n(\lambda)-2}{\lambda-2} \times  \frac{T_m(y)-2}{y-2} \, t^{-2}$ respectively. 
\label{prop1}
\end{proposition}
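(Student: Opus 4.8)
The plan is to evaluate the two determinants in Lemma~\ref{2m,2n} directly by diagonalizing the matrices involved, exploiting the fact that $A$, $B$, and $W$ are all elements of $SL_2(\BC)$ whose relevant geometric sums can be expressed through Chebyshev polynomials. The key observation is that $BA^{-1}$ and $B^{-1}A$ each lie in $SL_2(\BC)$ with trace $y = \tr AB^{-1}$, while $W$ has trace $\lambda$; this is exactly the setting where the $T_i$ polynomials arise. Concretely, for any $M \in SL_2(\BC)$ with $\tr M = \tau$ one has $M^j = S_{j-1}(\tau) M - S_{j-2}(\tau) I$ by the Cayley--Hamilton relation, so the telescoping sums $BA^{-1} + \cdots + (BA^{-1})^m$ and $W^{-1} + \cdots + W^{-n}$ can each be written as a $\BC$-linear combination of the relevant matrix and the identity, with coefficients that are sums of Chebyshev polynomials.

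First I would compute $W^{-1} + \cdots + W^{-n}$ as $P(\lambda)\,W^{-1} + Q(\lambda)\,I$ for explicit Chebyshev sums $P, Q$, and likewise $BA^{-1} + \cdots + (BA^{-1})^m = R(y)\,BA^{-1} + U(y)\,I$. Taking determinants of products of such expressions, I would use the general identity that for $M \in SL_2(\BC)$ the determinant of a $\BC$-linear combination factors nicely; the upshot is that $|A(W^{-1}+\cdots+W^{-n})(BA^{-1}+\cdots+(BA^{-1})^m)|$ collapses to a product of two one-variable factors, one in $\lambda$ and one in $y$. The target value $\frac{T_n(\lambda)-2}{\lambda-2}$ is precisely the closed form for the sum $\sum_{j=1}^{n}(\text{something in } S_\bullet)$ that a geometric matrix sum produces, since $T_n(\lambda) - 2 = (\lambda-2)\bigl(S_{n-1}(\lambda)+\cdots\bigr)$ telescopes. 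So the identity $\frac{T_n(\lambda)-2}{\lambda-2} = \sum_{i} S_i$-type expression will be the algebraic engine; I would verify it from the defining recurrences of $S_i$ and $T_i$, perhaps as a preliminary sublemma.

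For the lowest-degree term I would proceed identically, noting that the matrix $(BA^{-1})^m(I + B^{-1}A + \cdots + (B^{-1}A)^{m-1})B^{-1}$ is, up to the shift by $W^{-1}+\cdots+W^{-n}$, a conjugate or transpose-symmetric analogue of the highest-degree matrix. The reciprocity of the twisted Alexander polynomial recorded in Remark~\ref{genus}(2) strongly suggests the lowest-degree coefficient equals the highest, and indeed the symmetric roles of $A \leftrightarrow B$ and $BA^{-1} \leftrightarrow B^{-1}A$ (both of trace $y$) should make the two computations mirror each other, yielding the same $\frac{T_n(\lambda)-2}{\lambda-2}\cdot\frac{T_m(y)-2}{y-2}$ with $t^{-2}$ in place of $t^2$.

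The main obstacle I anticipate is the bookkeeping in collapsing the \emph{product} of three matrix sums into a clean product of two scalar Chebyshev expressions: a priori the determinant of a product of noncommuting $SL_2$ matrices mixes the $\lambda$-data and the $y$-data, and it is not obvious that they decouple into a single factor in $\lambda$ times a single factor in $y$. Making this factorization rigorous will likely require using that $A$ (with parameter $s$, $x = s+s^{-1}$) interacts with the $BA^{-1}$-sum and the $W$-sum in a way that, after taking the determinant and simplifying via Lemma~\ref{S} and the trace identities, leaves no cross terms. I expect the resolution to hinge on choosing coordinates in which $W$ is diagonal (valid generically, then extended by continuity), reducing each determinant to a scalar product that is manifestly separable; the real work is confirming that the surviving scalar factors match $\frac{T_n(\lambda)-2}{\lambda-2}$ and $\frac{T_m(y)-2}{y-2}$ exactly rather than up to a spurious extra factor.
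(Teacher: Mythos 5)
Your overall strategy is the paper's: both reduce the two determinants in Lemma~\ref{2m,2n} to a product of Chebyshev-type scalars, one in $\lambda$ and one in $y$, and the ``preliminary sublemma'' you anticipate is exactly the paper's Lemma~\ref{simple}, which evaluates $(1+\beta_+ +\dots+\beta_+^{n-1})(1+\beta_- +\dots+\beta_-^{n-1})=\frac{T_n(\lambda)-2}{\lambda-2}$ for $\beta_\pm$ the roots of $z^2-\lambda z+1$. The one place you go astray is the ``main obstacle'' you single out: there is no decoupling problem. The determinant of a product of matrices is the product of the determinants regardless of noncommutativity, so $|A(W^{-1}+\dots+W^{-n})(BA^{-1}+\dots+(BA^{-1})^m)|$ equals $|A|\cdot|W^{-1}+\dots+W^{-n}|\cdot|BA^{-1}+\dots+(BA^{-1})^m|=1\cdot f(\lambda)\cdot g(y)$ on the nose; no choice of coordinates, simultaneous diagonalization, or continuity argument is needed (and $W$ and $BA^{-1}$ are not simultaneously diagonalizable in general, so that route would not even be available). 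Each remaining factor is the determinant of a polynomial in a single $SL_2(\BC)$ matrix, hence the product of that polynomial evaluated at the two eigenvalues --- which is precisely what the paper feeds into Lemma~\ref{simple}; your Cayley--Hamilton reduction to $P\cdot M+Q\cdot I$ followed by $\det(aM+bI)=a^2+ab\,\tr M+b^2$ computes the same quantity with more bookkeeping. The lowest-degree term is handled identically, using that $\tr(B^{-1}A)=\tr(AB^{-1})=y$ so the sum $I+B^{-1}A+\dots+(B^{-1}A)^{m-1}$ contributes the same factor $\frac{T_m(y)-2}{y-2}$ (after absorbing the unit determinants of $(BA^{-1})^m$ and $B^{-1}$ and a shift of the geometric sum by one power); there is no need to invoke reciprocity of the twisted Alexander polynomial, which would in any case only relate the two extreme coefficients rather than compute either of them.
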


\begin{proof}
Let $\beta_{\pm}$ (resp. $\gamma_{\pm}$ ) are roots of $z^2-\lambda z+1$ (resp. $z^2-yz+1$). Lemma \ref{2m,2n} implies that the highest and lowest degree terms  of $\det \Phi \left( \frac{\partial r}{\partial a} \right)$ are
\begin{eqnarray*}
(1+\beta_+ +\dots+\beta_+^{n-1})(1+\beta_- +\dots+\beta_-^{n-1})(1+\gamma_+ +\dots+\gamma_+^{m-1})(1+\gamma_- +\dots+\gamma_-^{m-1})t^2
\end{eqnarray*}
and 
\begin{eqnarray*}
(1+\beta_+ +\dots+\beta_+^{n-1})(1+\beta_- +\dots+\beta_-^{n-1})(1+\gamma_+ +\dots+\gamma_+^{m-1})(1+\gamma_- +\dots+\gamma_-^{m-1})t^{-2}
\end{eqnarray*}
respectively. Proposition \ref{prop1} then follows from Lemma \ref{simple} below.
\end{proof}

\begin{lemma}
One has $$(1+\beta_+ +\dots+\beta_+^{n-1})(1+\beta_- +\dots+\beta_-^{n-1})=\frac{T_n(\lambda)-2}{\lambda-2} \in \BZ[\lambda].$$
\label{simple}
\end{lemma}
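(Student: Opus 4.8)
The plan is to use that $\beta_+$ and $\beta_-$ are the two roots of $z^2-\lambda z+1$, so that $\beta_++\beta_-=\lambda$ and $\beta_+\beta_-=1$, together with the standard fact that the Chebyshev polynomials $T_i$ (with $T_0=2$, $T_1(z)=z$, $T_{i+1}=zT_i-T_{i-1}$) satisfy $T_i(\lambda)=\beta_+^i+\beta_-^i$. This last identity holds because both sides agree for $i=0,1$ and obey the same recurrence: from $\beta_\pm^2=\lambda\beta_\pm-1$ one gets $\beta_\pm^{i+1}=\lambda\beta_\pm^i-\beta_\pm^{i-1}$, hence $\beta_+^{i+1}+\beta_-^{i+1}=\lambda(\beta_+^i+\beta_-^i)-(\beta_+^{i-1}+\beta_-^{i-1})$.

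First I would treat a generic $\lambda$, for which $\beta_+\neq\beta_-$ and neither root equals $1$, and sum each geometric progression as
$$1+\beta_\pm+\cdots+\beta_\pm^{n-1}=\frac{\beta_\pm^n-1}{\beta_\pm-1}.$$
Multiplying the two factors and using $\beta_+\beta_-=1$ with the identity above, the numerator becomes
$$(\beta_+^n-1)(\beta_-^n-1)=(\beta_+\beta_-)^n-(\beta_+^n+\beta_-^n)+1=2-T_n(\lambda),$$
while the denominator becomes
$$(\beta_+-1)(\beta_--1)=\beta_+\beta_--(\beta_++\beta_-)+1=2-\lambda.$$
Their quotient is exactly $\frac{T_n(\lambda)-2}{\lambda-2}$, which establishes the asserted formula on the generic locus.

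To remove the genericity and obtain the displayed identity for all $\lambda$ (including the degenerate values $\lambda=\pm2$ where the geometric-series step is invalid), I would observe that the left-hand side $\sum_{0\le j,k\le n-1}\beta_+^j\beta_-^k$ is a symmetric polynomial in $\beta_+,\beta_-$ with integer coefficients; by the fundamental theorem of symmetric polynomials it therefore lies in $\BZ[\beta_++\beta_-,\beta_+\beta_-]=\BZ[\lambda]$. On the other side, $T_n(2)=2$ forces $\lambda-2$ to divide the monic polynomial $T_n(\lambda)-2$ in $\BZ[\lambda]$, so $\frac{T_n(\lambda)-2}{\lambda-2}$ is likewise an element of $\BZ[\lambda]$. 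Two polynomials in $\lambda$ that agree on the Zariski-dense generic set agree identically, which simultaneously yields both the formula and the integrality claim $\frac{T_n(\lambda)-2}{\lambda-2}\in\BZ[\lambda]$.

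The computation is entirely routine; the only point demanding care is the passage from the generic identity to an equality of polynomials, i.e. ensuring the formula survives at $\lambda=2$ (where $\lambda-2$ vanishes but the left-hand side equals $n^2$) and at $\lambda=-2$ (where $\beta_+=\beta_-=-1$). Both degenerate cases are absorbed by the polynomial-identity argument above, so I do not expect a genuine obstacle.
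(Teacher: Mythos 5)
Your proof is correct and follows essentially the same route as the paper: sum each geometric series, use $\beta_+\beta_-=1$ and $T_n(\lambda)=\beta_+^n+\beta_-^n$ to rewrite the quotient as $\frac{T_n(\lambda)-2}{\lambda-2}$. The only difference is that you explicitly justify the degenerate values $\lambda=\pm2$ (and the integrality claim) via a polynomial-identity argument, a point the paper's one-line computation leaves implicit.
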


\begin{proof}
We have
$$LHS=\frac{(\beta_+^n-1)(\beta_-^n-1)}{(\beta_+-1)(\beta_--1)}=\frac{\beta_+^n + \beta_-^n -2}{\beta_+ + \beta_- -2 }=\frac{T_n(\lambda)-2}{\lambda-2}.$$
The lemma follows.
\end{proof}

Proposition \ref{prop1} implies that the highest and lowest degree terms of 
the twisted Alexander polynomial $\Delta_{K,\rho}(t)=\det \Phi \left( \frac{\partial r}{\partial a} \right) \big/ (1-tx+t^2)$ are $\frac{T_n(\lambda)-2}{\lambda-2} \times  \frac{T_m(y)-2}{y-2} \, t^0$  and $\frac{T_n(\lambda)-2}{\lambda-2} \times  \frac{T_m(y)-2}{y-2} \, t^{-2}$ respectively. 
Hence to prove Theorem \ref{thm:main-theorem} (1) for $J(2m,2n),~n > 0$, 
we only need to show that the coefficients of these terms are nonzero 
under the assumption that $\phi_K(2,y)= S_{n-1}(\lambda)\alpha_{k}(2,y)-S_{n-2}(\lambda)=0$ 
(because the roots of this equation correspond to the parabolic representations). 
To this end, 
we show that at $x=2$ the polynomials $\phi_K(2,y)$ and $\frac{T_n(\lambda)-2}{\lambda-2} \times  \frac{T_m(y)-2}{y-2}$ 
do not have any common zero $y \in \BC$ 
(in fact, if they have a common zero, the highest and lowest degree terms vanish at $x=2$). 
It is equivalent to show that at $x=2$ these polynomials are relatively prime in $\BC[y]$. 


Recall that $\lambda=\tr W=(y-2)(y+2-x^2)S^2_{m-1}(y)+2$ and
$$
\alpha_k(x,y) = 1-(y+2-x^2)S_{m-1}(y) \left( S_{m-1}(y) - S_{m-2}(y) \right).
$$
Lemmas \ref{twist} and \ref{2} below will complete the proof of Theorem \ref{thm:main-theorem} (1) for $J(2m,2n)$, $n > 0$.

\begin{lemma}
Suppose $x=2$. Then $\gcd(\phi_K(x,y),\frac{T_n(\lambda)-2}{\lambda-2})=1$ in $\BC[y]$.
\label{twist}
\end{lemma}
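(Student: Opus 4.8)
The plan is to show that at $x=2$ the polynomials $\phi_K(2,y)=S_{n-1}(\lambda)\alpha_k(2,y)-S_{n-2}(\lambda)$ and $\frac{T_n(\lambda)-2}{\lambda-2}$ share no common root in $\BC[y]$, and the key is to exploit the identity from Proposition~\ref{al}. Recall that on the locus $\phi_K(2,y)=0$ we derived $1=\bigl(\alpha_k^2(2,y)-\alpha_k(2,y)\lambda+1\bigr)S^2_{n-1}(\lambda)$, and by Lemma~\ref{al1} with $k=2m$ this reads $1=(y-2)^3 S^4_{m-1}(y)\,S^2_{n-1}(\lambda)$. I would argue by contradiction: suppose $y_0$ is a common root. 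From $\frac{T_n(\lambda)-2}{\lambda-2}=0$ I want to extract a statement about $S_{n-1}(\lambda)$ at $\lambda=\lambda(y_0)$, and then collide it with the identity above, which forces $S_{n-1}(\lambda)\neq 0$ (since its square has a nonzero value, namely the reciprocal of $(y_0-2)^3S^4_{m-1}(y_0)$, which in particular requires $y_0\neq 2$ and $S_{m-1}(y_0)\neq 0$).

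First I would translate $\frac{T_n(\lambda)-2}{\lambda-2}$ into the root language already set up in Lemma~\ref{simple}: writing $\beta_\pm$ for the roots of $z^2-\lambda z+1$, we have $\frac{T_n(\lambda)-2}{\lambda-2}=(1+\beta_++\cdots+\beta_+^{n-1})(1+\beta_-+\cdots+\beta_-^{n-1})$. A cleaner route, though, is to relate this quantity directly to the $S$-polynomials: since $T_n(\lambda)-2=(\lambda-2)\sum$ telescopes and $S_{n-1},S_{n-2}$ satisfy the same recursion, I expect an identity of the shape $\frac{T_n(\lambda)-2}{\lambda-2}=\bigl(S_{n-1}(\lambda)-S_{n-2}(\lambda)\bigr)$ up to a simple factor, so that the vanishing of the former gives a concrete relation between $S_{n-1}(\lambda)$ and $S_{n-2}(\lambda)$ at $\lambda(y_0)$. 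Combining that relation with $\phi_K(2,y_0)=0$, i.e. $S_{n-1}(\lambda)\alpha_k(2,y_0)=S_{n-2}(\lambda)$, and with Lemma~\ref{S} applied to index $n$, I should be able to pin down the value of $\alpha_k(2,y_0)$ and feed it back into the quadratic $\alpha_k^2-\alpha_k\lambda+1$, contradicting Lemma~\ref{al1} unless $(y_0-2)^3S^4_{m-1}(y_0)=0$.

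The main obstacle I anticipate is the bookkeeping that confirms $S_{n-1}(\lambda(y_0))\neq 0$ and that the common-root hypothesis is genuinely inconsistent rather than merely pushing the problem to a degenerate sublocus; concretely, I must rule out the boundary cases $y_0=2$ and $S_{m-1}(y_0)=0$ separately, checking that on each of these $\phi_K(2,y_0)\neq 0$ or $\frac{T_n(\lambda)-2}{\lambda-2}\neq 0$ so they cannot be common roots either. At $y_0=2$ one has $\lambda=2$ by Lemma~\ref{trace}, and since $\frac{T_n(\lambda)-2}{\lambda-2}\big|_{\lambda=2}=n\neq 0$ (the limiting value of the telescoped sum is $n$), this point is automatically excluded; the case $S_{m-1}(y_0)=0$ similarly forces $\lambda=2$ by the trace formula and is dispatched the same way. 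Once these degenerate loci are cleared, the identity $1=(y-2)^3S^4_{m-1}(y)S^2_{n-1}(\lambda)$ guarantees every common-root candidate has $S_{n-1}(\lambda)\neq 0$, and the algebraic collision above yields the contradiction, proving $\gcd=1$ in $\BC[y]$.
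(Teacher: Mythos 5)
Your overall strategy---restrict to the zero locus of $\phi_K(2,y)$, use the identity $1=\bigl(\alpha_k^2-\alpha_k\lambda+1\bigr)S_{n-1}^2(\lambda)=(y-2)^3S_{m-1}^4(y)\,S_{n-1}^2(\lambda)$ from Proposition~\ref{al} and Lemma~\ref{al1}, and collide it with the vanishing of $\frac{T_n(\lambda)-2}{\lambda-2}$---can be made to work, and the degenerate loci $y_0=2$ and $S_{m-1}(y_0)=0$ are indeed harmless since they force $\lambda=2$, where $\frac{T_n(\lambda)-2}{\lambda-2}$ equals $n^2$ (not $n$ as you wrote, but still nonzero). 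However, the pivot of your argument is a false identity: $\frac{T_n(\lambda)-2}{\lambda-2}$ is \emph{not} $S_{n-1}(\lambda)-S_{n-2}(\lambda)$ up to a simple factor. Already for $n=2$ the former is $\lambda+2$ and the latter is $\lambda-1$; more structurally, the roots of $\frac{T_n(\lambda)-2}{\lambda-2}$ are $\lambda=2\cos\frac{2\pi j}{n}$ while those of $S_{n-1}-S_{n-2}$ are $\lambda=2\cos\frac{(2j-1)\pi}{2n-1}$. The correct translation (which the paper uses) is: $T_n(\lambda)=2$ with $\lambda\ne 2$ forces $\beta_+^n=\beta_-^n=1$ and $\beta_+\ne 1$, hence $S_{n-1}(\lambda)=\frac{\beta_+^n-\beta_-^n}{\beta_+-\beta_-}=0$ \emph{unless} $\beta_+=-1$, i.e. $\lambda=-2$ (possible only for $n$ even). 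Note also that your diagnosis of where the difficulty lies is inverted: when $S_{n-1}(\lambda)=0$ the contradiction with $1=(y-2)^3S_{m-1}^4(y)S_{n-1}^2(\lambda)$ is immediate (and this is in fact a clean alternative to the paper's direct evaluation $\phi_K=S_{n-1}\alpha_k-S_{n-2}=0-(-1)=1\ne 0$); the hard case is precisely when $S_{n-1}(\lambda)\ne 0$.

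That exceptional case $\lambda=-2$, $n$ even, where $S_{n-1}(-2)=-n$ and $S_{n-2}(-2)=n-1$, is where the paper spends essentially all of its effort: assuming $\phi_K=0$ one gets $\alpha_k=\frac1n-1$, and combining the formulas for $\lambda-2$ and $1-\alpha_k$ with Lemma~\ref{S} yields $y=2+4n^2x^2$, after which $-4=(y-2)(y+2-x^2)S_{m-1}^2(y)\ge 0$ for real $x$ gives the contradiction. Your proposal neither identifies this subcase nor carries out any computation that would dispose of it (it can also be handled inside your framework: $\lambda=-2$ gives $(y-2)^2S_{m-1}^2(y)=-4$, and your identity then gives $16/(y-2)=1/n^2$, so $y-2=16n^2>0$ and $S_{m-1}^2(y)=-4/(y-2)^2<0$ for real $y$, a contradiction---but some such argument must actually appear). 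As written, the proof has a genuine gap: the false intermediate identity, and the unaddressed $\lambda=-2$ branch.
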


\begin{proof} It is equivalent to show that at $x=2$, $\phi_K(x,y)$ and $\frac{T_n(\lambda)-2}{\lambda-2}$ do not have any common root $y \in \BC$.

Suppose $T_n(\lambda)=2$ and $\lambda \not= 2$ then $\beta_+^n = \beta_-^n=1$ and $\beta_{+} \not= 1$. If $\beta_{+} \not= -1$ then $S_{n-1}(\lambda)=\frac{\beta_+^n - \beta_-^n}{\beta_+ - \beta_-}=0$ and $S_{n-2}(\lambda)=\frac{\beta_+^{n-1} - \beta_-^{n-1}}{\beta_+ - \beta_-}=-1$; hence $\phi_K(x,y)= 1 \not= 0$.

If $\beta_{+}=-1$ (in this case $n$ must be even) then $\lambda=-2$. It implies that $S_{n-1}(\lambda)=-n$, and $S_{n-2}(\lambda)=n-1$. Hence $\phi_K(x,y)=-n \, \alpha_k(x,y)-(n-1)$.

Suppose $\phi_K(x,y)=0$. Then $\alpha_k(x,y)=\frac{1}{n}-1$. We have
\begin{eqnarray*}
(y-2)(y+2-x^2)S^2_{m-1}(y) &=& \lambda-2=-4,\\
(y+2-x^2) \left( S^2_{m-1}(y)-S_{m-1}(y)S_{m-2}(y) \right) &=& 1-\alpha_k(x,y)=2-\frac{1}{n}.
\end{eqnarray*}
It implies that $(y+2-x^2)S^2_{m-1}(y)=\frac{-4}{y-2}$ and $(y+2-x^2)S_{m-1}(y)S_{m-2}(y)=-\left( \frac{4}{y-2}+2-\frac{1}{n} \right)$.

Since $$S^4_{m-1}(y)-yS^2_{m-1}(y)(S_{m-1}(y)S_{m-2}(y)) + (S_{m-1}(y)S_{m-2}(y))^2=S^2_{m-1}(y)$$
(by Lemma \ref{S}), we must have 
$$\frac{16}{(y-2)^2}-\frac{4y}{y-2} \left( \frac{4}{y-2}+2-\frac{1}{n} \right) + \left( \frac{4}{y-2}+2-\frac{1}{n} \right)^2=\frac{-4(y+2-x^2)}{y-2}$$
i.e. $y=2+4n^2x^2$.

If $x \in \BR$ then $y=2+4n^2x^2 \in \BR$ and $$-4=(y-2)(y+2-x^2)S_{m-1}^2(y)=4n^2x^2(4+(4n^2-1)x^2)S^2_{m-1}(y) \ge 0,$$ a contradiction. Hence $\phi_K(x,y) \not= 0$ when $\frac{T_n(\lambda)-2}{\lambda-2}=0$ and $x \in \BR$. The lemma follows.
\end{proof}
 
\begin{lemma}
Suppose $x=2$. Then $\gcd(\phi_K(x,y),\frac{T_m(y)-2}{y-2})=1$ in $\BC[y]$.
\label{2}
\end{lemma}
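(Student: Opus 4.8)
Lemma \ref{2} asserts that at $x=2$, the Riley polynomial $\phi_K(2,y)$ and $\frac{T_m(y)-2}{y-2}$ share no common root in $\BC$. The plan is to mirror the structure of the proof of Lemma \ref{twist}: assume $\frac{T_m(y)-2}{y-2}=0$ (so $y$ is a root of the ``other'' Chebyshev factor, this time in the variable $y$ rather than $\lambda$), substitute into the defining expressions for $\alpha_k$ and $\lambda$, and derive a contradiction with $\phi_K(2,y)=0$.

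First I would analyze the hypothesis $T_m(y)=2$ with $y\neq 2$. Writing $\gamma_\pm$ for the roots of $z^2-yz+1$, the condition $T_m(y)=2$ gives $\gamma_+^m=\gamma_-^m=1$ with $\gamma_+\neq 1$. As in Lemma \ref{twist}, this forces $S_{m-1}(y)=\frac{\gamma_+^m-\gamma_-^m}{\gamma_+-\gamma_-}=0$, except in the boundary case $\gamma_+=-1$ (i.e. $y=-2$, requiring $m$ even). The generic branch $S_{m-1}(y)=0$ is the crucial simplification: substituting into the formula $\alpha_k(2,y)=1-(y+2-x^2)S_{m-1}(y)(S_{m-1}(y)-S_{m-2}(y))$ collapses it to $\alpha_k(2,y)=1$, and similarly $\lambda=\tr W=2+(y-2)(y+2-x^2)S_{m-1}^2(y)=2$. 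But then $\phi_K(2,y)=S_{n-1}(\lambda)\alpha_k-S_{n-2}(\lambda)=S_{n-1}(2)\cdot 1-S_{n-2}(2)=n-(n-1)=1\neq 0$, since $S_i(2)=i+1$. This immediately rules out a common root on the generic branch.

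The remaining work is the exceptional case $y=-2$ (with $m$ even). Here I would substitute $x=2$, $y=-2$ directly. One checks $S_{m-1}(-2)=(-1)^{m-1}m=-m$ and $S_{m-2}(-2)=(-1)^{m-2}(m-1)=m-1$ (using $S_i(-2)=(-1)^i(i+1)$), so $\alpha_k(2,-2)=1-(-2+2-4)\cdot(-m)\cdot(-m-(m-1))=1-(-4)(-m)(-2m+1)$, which I would evaluate to a concrete integer in $m$; likewise $\lambda=2+(-4)(-4)(m^2)=2+16m^2$ becomes an explicit number. It then remains to verify that $\phi_K(2,-2)=S_{n-1}(\lambda)\alpha_k-S_{n-2}(\lambda)\neq 0$ for this explicit $\lambda$ and $\alpha_k$; since $\lambda>2$ here, the values $S_{n-1}(\lambda),S_{n-2}(\lambda)$ are positive and a sign or magnitude estimate should close the case.

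The main obstacle I anticipate is the exceptional branch $y=-2$, which is genuinely arithmetic rather than formal: unlike the generic branch, the factors do not vanish, so I cannot argue by a clean collapse and must instead track the explicit integer values and show the resulting expression is nonzero. If a direct evaluation proves awkward, an alternative is to exploit the reality argument used at the end of Lemma \ref{twist}—namely that for $x\in\BR$ one has strong sign constraints on $(y-2)(y+2-x^2)S_{m-1}^2(y)$—to derive a contradiction from assuming $\phi_K=0$ simultaneously with $y=-2$. I would lean on this real-variable positivity as the fallback mechanism, since $\lambda-2=16m^2>0$ must be reconciled with whatever sign the hypothesis $\phi_K=0$ imposes.
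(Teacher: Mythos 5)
Your proposal is correct, and on the generic branch it coincides with the paper's proof: when $T_m(y)=2$, $y\neq 2$ and $\gamma_+\neq -1$, both arguments get $S_{m-1}(y)=0$, hence $\lambda=2$, $\alpha_k(2,y)=1$ and $\phi_K(2,y)=S_{n-1}(2)-S_{n-2}(2)=1\neq 0$. The divergence is in the exceptional case $y=-2$ (with $m$ even): your values $\lambda=16m^2+2$ and $\alpha_k(2,-2)=1+4m(2m-1)$ agree with the paper's ($4m^2x^2+2$ and $m(2m-1)x^2+1$ at $x=2$), but you close the case with a positivity estimate, whereas the paper uses parity. Your route: since $\lambda>2$, the sequence $S_i(\lambda)$ is strictly increasing for $i\ge 0$, so $S_{n-1}(\lambda)>S_{n-2}(\lambda)\ge 0$ for $n\ge 1$, and $\alpha_k\ge 1$, giving $\phi_K=\alpha_k S_{n-1}(\lambda)-S_{n-2}(\lambda)>0$; this is complete once you record $\alpha_k\ge 1$ and the monotonicity, so the ``fallback'' you mention is not needed. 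The paper's route: $\lambda$ is an even integer and $\alpha_k$ an odd integer, so $\phi_K\equiv S_{n-1}(\lambda)-S_{n-2}(\lambda)\equiv 1\pmod 2$ is an odd integer, hence nonzero. Both are valid in this subsection (where $n>0$, which your sign estimate uses); the parity argument has the mild advantage of being indifferent to the sign of $n$ and of being the same mechanism the paper redeploys in the odd-$k$ case of Lemma \ref{lem1}, while yours is more elementary and makes the strict positivity of the leading coefficient explicit.
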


\begin{proof}
Suppose $T_m(y)=2$ and $y \not= 2$ then $\gamma_+^m =\gamma_-^m=1$ and $\gamma_{+} \not= 1$. If $\gamma_{+} \not= -1$ then $S_{m-1}(y)=\frac{\gamma_+^m - \gamma_-^m}{\gamma_+ - \gamma_-}=0$, hence $\lambda=2$ and $\alpha_k(x,y)=1$. It implies that $\phi_K(x,y)=S_{n-1}(2)-S_{n-2}(2)=1 \not= 0$.

If $\gamma_{+} = -1$ (in this case $m$ must be even) then $y=-2$. We have $\lambda=(y-2)(y+2-x^2)S^2_{m-1}(y)+2=4m^2x^2+2$ and $$\alpha_k(x,y)=1-(y+2-x^2)S_{m-1}(y)(S_{m-1}(y)-S_{m-2}(y))=m(2m-1)x^2+1.$$ It implies that
$$\phi_K(x,y)=S_{n-1}(\lambda)\alpha_k(x,y)-S_{n-1}(\lambda)=\left( m(2m-1)x^2+1 \right) S_{n-1}(\lambda)-S_{n-1}(\lambda).$$

If $x \in \BZ$ then $\lambda=4m^2x^2+2 \in \BZ$ is even. Hence $\phi_K(x,y) \equiv S_{n-1}(\lambda)-S_{n-2}(\lambda) \pmod{2}$ is odd and so is non-zero. Therefore $\phi_K(x,y) \not=0$ when $\frac{T_m(y)-2}{y-2}=0$ and $x \in \BZ$. The lemma follows.
\end{proof}

\subsubsection{\textbf{The case} $J(2m+1,2n), \, n>0$}\label{subsection:(2m+1,2n)n>0}

From \eqref{M2} we have
$$\det \Phi \left( \frac{\partial r}{\partial a} \right) =  |I+(I-tA)(t^{-2}W^{-1}+ \dots + t^{-2n}W^{-n})V|t^{4n}$$
where
$$V=-(BA^{-1}+\dots+(BA^{-1})^m)+t(BA^{-1})^mB(I+AB^{-1}+\dots+(AB^{-1})^{m}).$$

We first consider the case $m=0$ (in this case we must have $n>1$ so that $K$ is a non-trivial knot). Then $W=BA$ and
\begin{eqnarray*}
\det \Phi \left( \frac{\partial r}{\partial a} \right) &=&  |I+(I-tA)(t^{-2}W^{-1}+ \dots + t^{-2n}W^{-n})tB| \, t^{4n}\\
&=& |(t^{-2}W^{-1}+ \dots + t^{-2n}W^{-n})tB-tA(t^{-4}W^{-2}+ \dots + t^{-2n}W^{-n})tB| \, t^{4n}.
\end{eqnarray*}
It implies that the highest and lowest degree terms of $\det \Phi \left( \frac{\partial r}{\partial a} \right)$ are $|t^{-1}W^{-1}B|t^{4n}=t^{4n-2}$ and $|t^{1-2n}W^{-n}B|t^{4n}=t^2$ respectively. Hence the highest and lowest degree terms of $\Delta_{K,\rho}(t)$ are $t^{4n-4}$ and 
$t^2$ respectively. 
Since the genus of $J(1,2n)$, where $n>1$, is $n-1$, we complete the proof of Theorem \ref{thm:main-theorem} (1) for $J(1,2n), \,n > 1$.

We now consider the case $m >0$. In this case, we have the following.

\begin{lemma} 
\begin{enumerate}
\item
The highest degree term of $\det \Phi \left( \frac{\partial r}{\partial a} \right)$ is 
$$|I-AW^{-1}(BA^{-1})^mB(I+AB^{-1}+\dots+(AB^{-1})^m)|t^{4n}$$
$$=|I+BA^{-1}+\dots+(BA^{-1})^{m-1}|t^{4n}.$$

\item
The lowest degree term of $\det \Phi \left( \frac{\partial r}{\partial a} \right)$ is
$$
|-W^{-n}(BA^{-1}+\dots+(BA^{-1})^m)|t^{0}=|I+BA^{-1}+\dots+(BA^{-1})^{m-1}|t^0.$$
\end{enumerate}
\label{2m+1,2n}
\end{lemma}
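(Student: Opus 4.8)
The plan is to read off the two extreme coefficients of the matrix Laurent polynomial sitting inside the determinant and then to simplify the resulting $2\times 2$ determinants using the fact that every element of $SL_2(\BC)$ has determinant $1$. Throughout I write $|M|=\det M$.

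First I would split $V=V_0+tV_1$ with $V_0=-(BA^{-1}+\dots+(BA^{-1})^m)$ and $V_1=(BA^{-1})^mB(I+AB^{-1}+\dots+(AB^{-1})^m)$, set $S=t^{-2}W^{-1}+\dots+t^{-2n}W^{-n}$, and expand
\[
P:=(I-tA)SV = SV_0 + tSV_1 - tASV_0 - t^2ASV_1 .
\]
Since the powers of $t$ in $S$ run from $t^{-2}$ down to $t^{-2n}$, the four summands occupy the ranges $[-2n,-2]$, $[1-2n,-1]$, $[1-2n,-1]$, $[2-2n,0]$ respectively. Hence in $I+P$ the powers of $t$ range exactly from $t^{-2n}$ to $t^{0}$, the top coefficient coming only from $-t^2ASV_1$ and the bottom coefficient only from $SV_0$.

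Next I would extract these coefficient matrices. The coefficient of $t^0$ in $I+P$ is $C_0=I-AW^{-1}V_1$ (the $I$ together with the $t^{-2}W^{-1}$-term of $-t^2ASV_1$), and the coefficient of $t^{-2n}$ is $D=W^{-n}V_0$ (from the $t^{-2n}W^{-n}$-term of $SV_0$; the identity does not contribute since $n\ge 1$). For a $2\times 2$ matrix Laurent polynomial whose top (resp.\ bottom) degree is $0$ (resp.\ $-2n$), the entrywise products forming the determinant show that $|I+P|$ has top (resp.\ bottom) degree $0$ (resp.\ $-4n$) with coefficient $|C_0|$ (resp.\ $|D|$). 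Multiplying by the prefactor $t^{4n}$, this gives that the highest degree term of $\det\Phi(\partial r/\partial a)$ is $|C_0|\,t^{4n}$ and the lowest is $|D|\,t^{0}$, which are exactly the first expressions in (1) and (2) (recall $V_1$ and $V_0$).

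It remains to simplify the two determinants. Using $W=(BA^{-1})^mBA(B^{-1}A)^m$, so that $AW^{-1}=(BA^{-1})^{m-1}B\,A^{-1}B^{-1}(AB^{-1})^m$, together with the cancellations $(AB^{-1})^m(BA^{-1})^m=I$ and $(BA^{-1})^m(AB^{-1})^j=(BA^{-1})^{m-j}$ for $0\le j\le m$, I would show $AW^{-1}(BA^{-1})^mB=(BA^{-1})^m$. Then $C_0=I-(BA^{-1})^m(I+AB^{-1}+\dots+(AB^{-1})^m)=-(BA^{-1}+\dots+(BA^{-1})^m)=-BA^{-1}(I+BA^{-1}+\dots+(BA^{-1})^{m-1})$. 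Since $A,B\in SL_2(\BC)$ give $\det(-BA^{-1})=1$, we get $|C_0|=|I+BA^{-1}+\dots+(BA^{-1})^{m-1}|$, the second equality in (1). Likewise $D=-W^{-n}(BA^{-1}+\dots+(BA^{-1})^m)$, and because $\det(-W^{-n})=\det(BA^{-1})=1$, we obtain $|D|=|I+BA^{-1}+\dots+(BA^{-1})^{m-1}|$, the second equality in (2). The routine part is the $t$-degree bookkeeping; the one step requiring genuine care is the word simplification $AW^{-1}(BA^{-1})^mB=(BA^{-1})^m$, where the $(B^{-1}A)^m$ tail of $W$ must be made to cancel the trailing $(BA^{-1})^m$ through the $AB^{-1}$–$BA^{-1}$ collapses. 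Once that identity is secured, everything else follows from $\det\equiv 1$ on $SL_2(\BC)$ and the elementary behaviour of the $2\times 2$ determinant at the extreme degrees.
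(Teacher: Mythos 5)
Your proof is correct and follows exactly the route the paper intends (the paper states this lemma without proof, as an easy consequence of the displayed formula for $\det\Phi(\partial r/\partial a)$): the degree bookkeeping, the identification of the extreme coefficient matrices, and the word simplification $AW^{-1}(BA^{-1})^mB=(BA^{-1})^m$ followed by factoring out $-BA^{-1}$ (resp. $-W^{-n}BA^{-1}$) of determinant $1$ all check out. Nothing further is needed.
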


Lemmas \ref{2m+1,2n} and \ref{simple} imply the following.

\begin{proposition}
The highest and lowest degree terms of $\det \Phi \left( \frac{\partial r}{\partial a} \right)$ are
$
\frac{T_m(y)-2}{y-2} \, t^{4n}
$ and $\frac{T_m(y)-2}{y-2} \, t^{0}$ respectively.
\label{prop2}
\end{proposition}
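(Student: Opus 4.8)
The plan is to reduce the two leading determinants, which Lemma \ref{2m+1,2n} has already identified as the single quantity $|I+BA^{-1}+\dots+(BA^{-1})^{m-1}|$, to an eigenvalue calculation, and then to invoke Lemma \ref{simple} in a relabeled form. The only extra input beyond the two cited lemmas is the observation that $M:=BA^{-1}$ lies in $SL_2(\BC)$ and has trace $y$: indeed $\det(BA^{-1})=1$, and since the trace is invariant under inversion in $SL_2(\BC)$, one has $\tr(BA^{-1})=\tr\big((AB^{-1})^{-1}\big)=\tr(AB^{-1})=y$, using the definition $y=\tr AB^{-1}$ from Section~\ref{nonab}.

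First I would record that both the highest and the lowest degree coefficients produced by Lemma \ref{2m+1,2n} are the same number $D:=|I+BA^{-1}+\dots+(BA^{-1})^{m-1}|$, so it suffices to evaluate $D$ once. Next I would compute $D$ via eigenvalues. Over $\BC$ the matrix $M=BA^{-1}$ can be put in upper-triangular form, with diagonal entries the two roots $\gamma_{\pm}$ of its characteristic polynomial $z^2-yz+1$. Then $I+M+\dots+M^{m-1}$ is upper-triangular with diagonal entries $1+\gamma_{\pm}+\dots+\gamma_{\pm}^{m-1}$, so
$$D=(1+\gamma_+ +\dots+\gamma_+^{m-1})(1+\gamma_- +\dots+\gamma_-^{m-1}).$$
Passing to triangular form sidesteps any worry about whether $M$ is diagonalizable, since the determinant of a polynomial in an upper-triangular matrix depends only on the diagonal; in particular the degenerate case $\gamma_+=\gamma_-$ is handled automatically.

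Finally I would apply Lemma \ref{simple} with the substitution $\beta_{\pm}\mapsto\gamma_{\pm}$, $\lambda\mapsto y$, and $n\mapsto m$. The proof of that lemma used only the relations $\beta_+\beta_-=1$, $\beta_++\beta_-=\lambda$, and $T_n(\lambda)=\beta_+^n+\beta_-^n$; the roots $\gamma_{\pm}$ of $z^2-yz+1$ satisfy the analogous relations $\gamma_+\gamma_-=1$, $\gamma_++\gamma_-=y$, $T_m(y)=\gamma_+^m+\gamma_-^m$, so the identical computation gives $D=\frac{T_m(y)-2}{y-2}$. Substituting this value back into the two expressions from Lemma \ref{2m+1,2n} yields the highest degree term $\frac{T_m(y)-2}{y-2}\,t^{4n}$ and the lowest degree term $\frac{T_m(y)-2}{y-2}\,t^{0}$, as claimed. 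I do not anticipate a genuine obstacle here; the only points needing minor care are the trace identity $\tr(BA^{-1})=y$ and the triangularization argument in the degenerate eigenvalue case, both of which are routine.
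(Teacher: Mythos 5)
Your proposal is correct and follows exactly the route the paper intends: the paper's entire proof is the sentence ``Lemmas \ref{2m+1,2n} and \ref{simple} imply the following,'' and your elaboration (reducing both coefficients to $|I+BA^{-1}+\cdots+(BA^{-1})^{m-1}|$, triangularizing $BA^{-1}$ with eigenvalues $\gamma_\pm$ satisfying $z^2-yz+1=0$, and applying Lemma \ref{simple} with $\lambda\mapsto y$, $n\mapsto m$) is precisely the argument the paper spells out for the analogous Proposition \ref{prop1}. No issues.
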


Proposition \ref{prop2} implies that the highest and lowest degree terms of $\Delta_{K,\rho}(t)$ are $\frac{T_m(y)-2}{y-2} \, t^{4n-2}$  and $\frac{T_m(y)-2}{y-2} \, t^0$ respectively. Hence to prove Theorem \ref{thm:main-theorem} (1) for $J(2m+1,2n)$, where $m, n > 0$, we only need to show that at $x=2$ (parabolic representation) the polynomials $\phi_K(x,y)= S_{n-1}(\lambda)\alpha_{k}(x,y)-S_{n-2}(\lambda)$ and $\frac{T_m(y)-2}{y-2}$ are relatively prime in $\BC[y]$. 

Recall that $\lambda=\tr w=x^2-y-(y-2)(y+2-x^2)S_{m}(y)S_{m-1}(y)$ and
$$\alpha_k(x,y) =1+(y+2-x^2)S_{m-1}(y) \left( S_m(y) - S_{m-1}(y) \right) .$$
Lemma \ref{lem1} below will complete the proof of Theorem \ref{thm:main-theorem} (1) for $J(2m+1,2n)$, where $m, n > 0$.
 
\begin{lemma}
Suppose $x=2$. Then $\gcd(\phi_K(x,y),\frac{T_m(y)-2}{y-2})=1$ in $\BC[y]$.
\label{lem1}
\end{lemma}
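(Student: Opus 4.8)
The plan is to mirror the strategy of Lemma~\ref{2}: I would show that no root $y$ of $\frac{T_m(y)-2}{y-2}$ can also be a root of $\phi_K(2,y)$, so that the two polynomials share no zero in $\BC$ and are therefore relatively prime in $\BC[y]$. As before, write $\gamma_{\pm}$ for the roots of $z^2-yz+1$, so that $T_m(y)=\gamma_+^m+\gamma_-^m$ and $\gamma_+\gamma_-=1$. If $T_m(y)=2$ and $y\neq 2$, then $\gamma_+^m=\gamma_-^m=1$ with $\gamma_+\neq 1$; in particular $\gamma_+=e^{2\pi i j/m}$ for some $j$, so $y=\gamma_++\gamma_-\in[-2,2)$ is real. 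I would then split into the two cases $\gamma_+\neq -1$ and $\gamma_+=-1$, exactly as in Lemma~\ref{2}.

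In the case $\gamma_+\neq -1$ one has $S_{m-1}(y)=\frac{\gamma_+^m-\gamma_-^m}{\gamma_+-\gamma_-}=0$, and $y$ lies in the open interval $(-2,2)$. Substituting $S_{m-1}(y)=0$ into the formulas recalled just before the lemma gives $\alpha_k(2,y)=1$ and $\lambda=4-y$, so that $\lambda\in(2,6)$; in particular $\lambda>2$. Then $\phi_K(2,y)=S_{n-1}(\lambda)-S_{n-2}(\lambda)$, and the key is to show this is strictly positive. Writing $\beta_{\pm}$ for the roots of $z^2-\lambda z+1$, the condition $\lambda>2$ forces $\beta_+>1>\beta_->0$ with $\beta_+\beta_-=1$, and
$$S_{n-1}(\lambda)-S_{n-2}(\lambda)=\frac{\beta_+^{\,n-1}(\beta_+-1)-\beta_-^{\,n-1}(\beta_--1)}{\beta_+-\beta_-}>0,$$
since both terms in the numerator are positive and $\beta_+-\beta_->0$. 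Hence $\phi_K(2,y)>0$, so it is nonzero.

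In the case $\gamma_+=-1$ (so $m$ is even) one has $y=-2$. Using $S_i(-2)=(-1)^i(i+1)$, I would compute $\lambda$ and $\alpha_k(2,-2)$ explicitly, finding that $\lambda=16m^2+16m+6$ is an even integer while $\alpha_k(2,-2)=8m^2+4m+1$ is an odd integer. Reducing modulo $2$ as in Lemma~\ref{2} then gives $\phi_K(2,-2)\equiv S_{n-1}(\lambda)-S_{n-2}(\lambda)\pmod 2$; since $\lambda$ is even the recurrence forces $S_i(\lambda)\equiv S_{i-2}(\lambda)\pmod 2$, so exactly one of $S_{n-1}(\lambda)$, $S_{n-2}(\lambda)$ is odd and $\phi_K(2,-2)$ is odd, hence nonzero.

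The main obstacle is the first case. In Lemma~\ref{2} the analogous specialization $S_{m-1}(y)=0$ produced $\lambda=2$ and $\alpha_k=1$, whence $\phi_K=S_{n-1}(2)-S_{n-2}(2)=1$ was immediate. Here, for $k=2m+1$, the same specialization instead yields $\lambda=4-y\neq 2$, so that trivial evaluation is unavailable; the crux is to exploit that the roots of $S_{m-1}$ are real and lie in $(-2,2)$, forcing $\lambda>2$, and to convert this into the sign estimate above. Establishing the positivity of $S_{n-1}(\lambda)-S_{n-2}(\lambda)$ for $\lambda>2$ is the genuinely new ingredient compared with the even case; the $\gamma_+=-1$ case is then a routine parity computation parallel to Lemma~\ref{2}.
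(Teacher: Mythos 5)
Your proof is correct, and its overall skeleton (splitting on $\gamma_+\neq -1$ versus $\gamma_+=-1$, with a mod $2$ parity argument in the second case) coincides with the paper's. The one genuine difference is how you finish the first case. There, after reducing to $\phi_K(2,y)=S_{n-1}(\lambda)-S_{n-2}(\lambda)$ with $\lambda=4-y>2$, the paper invokes the explicit location of the roots of $S_{n-1}(\lambda)-S_{n-2}(\lambda)$, namely $\lambda=2\cos\frac{(2j'-1)\pi}{2n-1}$ (citing \cite[Lemma~4.13]{LT}), and derives the contradiction $x^2=y+\lambda<4$. You instead prove directly that $S_{n-1}(\lambda)-S_{n-2}(\lambda)>0$ whenever $\lambda>2$, via the factorization of the numerator as $\beta_+^{\,n-1}(\beta_+-1)-\beta_-^{\,n-1}(\beta_--1)$ with $\beta_+>1>\beta_->0$; this is valid for all $n\geq 1$, which is the range relevant to this subsection. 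The two arguments encode the same fact (all roots of $S_{n-1}-S_{n-2}$ lie in $(-2,2)$), but yours is self-contained and avoids the external citation, while the paper's makes the roots explicit. Your second case ($\lambda=16m^2+16m+6$ even, $\alpha_k(2,-2)=8m^2+4m+1$ odd, hence $\phi_K(2,-2)\equiv S_{n-1}(\lambda)-S_{n-2}(\lambda)\pmod 2$ is odd) matches the paper's computation exactly.
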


\begin{proof}
Suppose $T_m(y)=2$ and $y \not= 2$, then $\gamma_+^m = \gamma_-^m=1$ and $\gamma_{+} \not= 1$. If $\gamma_{+} \not= -1$ then $S_{m-1}(y)=0$ and $S_{m}(y)=1$, hence $\lambda=x^2-y$ and $\alpha_k(x,y)=1$. It implies that $\phi_K(x,y)=S_{n-1}(\lambda)-S_{n-2}(\lambda).$ 

Since $\gamma_+^m=1$, we have $y=\gamma_+ + \gamma_+^{-1}=2\cos \frac{2\pi j}{m}$ for some $0<j<m$. If $\phi_K(x,y)=S_{n-1}(\lambda)-S_{n-2}(\lambda)=0$ then $\lambda=2\cos \frac{(2j'-1)\pi}{2n-1}$ for some $1 \le j' \le n-1$, see e.g. \cite[Lemma 4.13]{LT}. Hence $x^2=y+\lambda=2(\cos \frac{2\pi j}{m}+\cos \frac{(2j'-1)\pi}{2n-1}) < 4.$

If $\gamma_{+} = -1$ (in this case $m$ must be even) then $y=-2$. We have $\lambda=-(y-2)(y+2-x^2)S_{m}(y)S_{m-1}(y)+x^2-y=(2m+1)^2x^2+2$ and 
$$\alpha_k(x,y)=1+(y+2-x^2)S_{m-1}(y) \left( S_m(y) - S_{m-1}(y) \right)=m(2m+1)x^2+1.$$
If $x$ is an even integer then $\lambda=(2m+1)^2x^2+2$ is an even integer and $\alpha_k(x,y)=m(2m+1)x^2+1$ is an odd integer. Hence 
$$\phi_K(x,y)=S_{n-1}(\lambda)\alpha_k(x,y)-S_{n-2}(\lambda) \equiv S_{n-1}(\lambda)-S_{n-2}(\lambda) \pmod{2}$$ is odd and so is non-zero. 

In both cases, we obtain $\phi_K(x,y) \not= 0$ when $\frac{T_m(y)-2}{y-2}=0$ and $x$ is an even integer $\ge 2$. The lemma follows.
\end{proof}

Next we consider the case $n<0$. 
We put $l=-n~(l>0)$. 
For $r=w^naw^{-n}b^{-1}=w^{-l}aw^lb^{-1}$, 
we have 
\begin{align}
\frac{\p r}{\p a}
&=
\frac{\p w^{-l}}{\p a}+w^{-l}\left(1+a\frac{\p w^l}{\p a}\right) \nonumber\\
&=
w^{-l}
\left(
1-(1-a)\left(1+w+\cdots+w^{l-1}\right)\frac{\p w}{\p a}
\right).
\label{n<0}
\end{align}

\subsubsection{\textbf{The case} $J(2m,2n), \, n<0$}
From \eqref{n<0} we have 
$$
\det \Phi \left( \frac{\partial r}{\partial a} \right)
=
\left|
I-(I-tA)(I+W+\cdots+W^{l-1})V
\right|,
$$
where 
$$
V=-(BA^{-1}+\dots+(BA^{-1})^m)+(BA^{-1})^m(I+B^{-1}A+\dots+(B^{-1}A)^{m-1})t^{-1}B^{-1}.
$$

\begin{lemma}
\begin{enumerate}
\item
The highest degree term of $\det \Phi \left( \frac{\partial r}{\partial a} \right)$ is 
$$
\left|
-A(I+W+\cdots+W^{l-1})(BA^{-1}+\cdots+(BA^{-1})^m)
\right|t^2.
$$

\item
The lowest degree term of $\det \Phi \left( \frac{\partial r}{\partial a} \right)$ is 
$$
\left|
-(I+W+\cdots+W^{l-1})(BA^{-1})^m(I+B^{-1}A+\cdots+(B^{-1}A)^{m-1})B^{-1}
\right|t^{-2}.
$$
\end{enumerate}
\end{lemma}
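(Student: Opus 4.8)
The plan is to read off the two extreme $t$-coefficients of the $2\times2$ matrix inside the determinant and then invoke the same mechanism that was used in the case $n>0$: for a $2\times2$ matrix whose entries are Laurent polynomials in $t$, the determinant is a homogeneous quadratic expression in those entries, so the coefficient of the top (resp. bottom) power of $t$ in the determinant is the ordinary determinant of the matrix formed by the top (resp. bottom) coefficients of the entries. The computation is entirely parallel to Lemma \ref{2m,2n}.

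First I would split $V$ according to powers of $t$. Write $V=V_0+t^{-1}V_{-1}$ with $V_0=-(BA^{-1}+\cdots+(BA^{-1})^m)$ and $V_{-1}=(BA^{-1})^m(I+B^{-1}A+\cdots+(B^{-1}A)^{m-1})B^{-1}$, and set $P=I+W+\cdots+W^{l-1}$, which is independent of $t$. By the displayed formula preceding the lemma, $\det\Phi(\p r/\p a)=\det N$, where $N=I-(I-tA)P(V_0+t^{-1}V_{-1})$. Expanding $(I-tA)P(V_0+t^{-1}V_{-1})$ and collecting powers of $t$ then gives $N=tC_1+C_0+t^{-1}C_{-1}$ with $C_1=APV_0$ and $C_{-1}=-PV_{-1}$ (the middle coefficient $C_0$ will not be needed).

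Next I would take determinants. The essential observation is that the highest power of $t$ occurring in any entry of $N$ is $t^{+1}$, produced solely by the factor $-tA$ acting on the $t^0$-part $V_0$ of $V$, and the lowest is $t^{-1}$, produced solely by the $t^{-1}$-part $V_{-1}$; the summand $I$, the factor $P$, and $V_0$ all sit in degree $0$. Consequently $\det N$ is supported in degrees $-2,\dots,2$, its coefficient of $t^{2}$ equals $\det C_1$, and its coefficient of $t^{-2}$ equals $\det C_{-1}$. Substituting $C_1=-A(I+W+\cdots+W^{l-1})(BA^{-1}+\cdots+(BA^{-1})^m)$ and $C_{-1}=-(I+W+\cdots+W^{l-1})(BA^{-1})^m(I+B^{-1}A+\cdots+(B^{-1}A)^{m-1})B^{-1}$ yields the two asserted terms.

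There is no genuinely hard step here: everything reduces to the bookkeeping of $t$-degrees in a product of three matrix factors, together with the homogeneity of the $2\times2$ determinant. The only point deserving a word of care — and the closest thing to an obstacle — is verifying that $tA$ and $t^{-1}B^{-1}$ are the unique sources of the extreme degrees, so that the degree-$0$ pieces $I$, $P$, and $V_0$ cannot conspire to add an extra contribution to, or cause cancellation within, the $t^{\pm2}$-coefficients of $\det N$. Once this is checked, both formulas follow at once, exactly as in the case $n>0$.
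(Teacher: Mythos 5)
Your proposal is correct and is essentially the argument the paper intends: the paper states this lemma without proof as following easily (in parallel with Lemma \ref{2m,2n}) from exactly the degree bookkeeping you carry out, namely that $tA$ acting on the $t$-free part of $V$ and the $t^{-1}B^{-1}$ factor are the sole sources of the extreme degrees, and that the $t^{\pm 2}$-coefficients of the $2\times 2$ determinant are the determinants of the corresponding extreme coefficient matrices $C_{\pm 1}$. Your identification of $C_1=APV_0$ and $C_{-1}=-PV_{-1}$ matches the stated formulas, so nothing further is needed.
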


We can apply the similar argument as in Subsection~\ref{subsection:(2m,2n)n>0} and 
hence $\Delta_{K,\rho}(t)$, where $\rho$ is parabolic, determines the knot genus in this case. 

\subsubsection{\textbf{The case} $J(2m+1,2n), \, n<0$}\label{subsection:(2m+1,2n)n<0}
From \eqref{n<0} we have 
\begin{align*}
\det \Phi \left( \frac{\partial r}{\partial a} \right)
&=
\left|
t^{-2l}W^{-l}
\left(
I-(I-tA)(I+t^2W+\cdots+t^{2(l-1)}W^{l-1})V
\right)
\right|\\
&=
\left|
I-(I-tA)(I+t^2W+\cdots+t^{2(l-1)}W^{l-1})V
\right| \, t^{-4l},
\end{align*}
where
$$
V=-(BA^{-1}+\dots+(BA^{-1})^m)+t(BA^{-1})^mB(I+AB^{-1}+\dots+(AB^{-1})^{m}).
$$

\begin{lemma}
\begin{enumerate}
\item
The highest degree term of $\det \Phi \left( \frac{\partial r}{\partial a} \right)$ is 
$$
\left|
AW^{l-1}(BA^{-1})^mB(I+AB^{-1}+\cdots+(AB^{-1})^m)
\right|t^0
=
|I+AB^{-1}+\cdots+(AB^{-1})^m|t^0.
$$

\item
The lowest degree term of $\det \Phi \left( \frac{\partial r}{\partial a} \right)$ is 
$$
\left|
I+BA^{-1}+\cdots+(BA^{-1})^m
\right|t^{-4l}.
$$
\end{enumerate}
\label{prop3}
\end{lemma}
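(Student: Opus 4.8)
The plan is to read the two extreme-degree coefficients directly off the factored formula for $\det\Phi(\partial r/\partial a)$ recorded just above the statement. Writing $l=-n>0$, set
\[\det \Phi\left(\frac{\partial r}{\partial a}\right) = t^{-4l}\det P, \qquad P = I - (I-tA)\bigl(I + t^2 W + \cdots + t^{2(l-1)}W^{l-1}\bigr)V,\]
where $V=V_0+tV_1$ with $V_0=-(BA^{-1}+\cdots+(BA^{-1})^m)$ and $V_1=(BA^{-1})^m B\,(I+AB^{-1}+\cdots+(AB^{-1})^m)$. The organizing principle I would use is that for a $2\times2$ matrix polynomial $P(t)=\sum_i t^iP_i$ the coefficient of the top power of $t$ in $\det P(t)$ equals the determinant of the top coefficient matrix, and likewise for the bottom power; this is immediate from $\det P=P_{11}P_{22}-P_{12}P_{21}$. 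Hence the whole task reduces to identifying the lowest and highest $t$-coefficient matrices of $P$, after which the $SL_2(\BC)$ relations $\det A=\det W=\det B=\det(BA^{-1})=1$ do the rest.

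First I would bound the degree range. The three factors of the product contribute $t$-degrees $\{0,1\}$ (from $I-tA$), $\{0,2,\dots,2(l-1)\}$ (from the middle factor), and $\{0,1\}$ (from $V$), so $P$ is a polynomial in $t$ of degree between $0$ and $2l$, and therefore $\det P$ lies between $t^0$ and $t^{4l}$. For the lowest term (part (2)) I would simply evaluate at $t=0$: the first and middle factors become $I$ and $V$ becomes $V_0$, so $P|_{t=0}=I-V_0=I+BA^{-1}+\cdots+(BA^{-1})^m$. The $t^0$-coefficient of $\det P$ is thus $|I+BA^{-1}+\cdots+(BA^{-1})^m|$, and multiplying by $t^{-4l}$ gives exactly the claimed lowest degree term.

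For the highest term (part (1)) I would extract the degree-$2l$ part of $P$. The only way to reach degree $2l$ in the product is to take $-tA$ from the first factor, $t^{2(l-1)}W^{l-1}$ from the middle factor, and $tV_1$ from $V$, producing $-t^{2l}AW^{l-1}V_1$; since the leading summand $I$ of $P$ sits in degree $0$, the degree-$2l$ coefficient matrix of $P$ is $+AW^{l-1}V_1$. Consequently the $t^{4l}$-coefficient of $\det P$ is $|AW^{l-1}(BA^{-1})^mB(I+AB^{-1}+\cdots+(AB^{-1})^m)|$, which by the determinant-$1$ relations collapses to $|I+AB^{-1}+\cdots+(AB^{-1})^m|$, and the prefactor $t^{-4l}$ turns $t^{4l}$ into $t^0$, yielding the stated highest degree term together with its displayed simplification.

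The only points that need genuine care are the sign and the order of factors in the degree-$2l$ coefficient, and the verification that no power higher than $2l$ (nor lower than $0$) can occur in $P$, so that these really are the extreme coefficients; I expect this exponent bookkeeping — in particular noting that $w^{-l}$ pushes a clean factor $t^{-4l}$ through the determinant — to be the main (and only mild) obstacle. The deeper analytic content, namely the nonvanishing of these two coefficients at parabolic representations, is not part of this lemma and would be handled afterwards exactly as in the $n>0$ cases already treated, by an argument parallel to Subsection~\ref{subsection:(2m+1,2n)n>0}.
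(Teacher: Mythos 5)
Your proposal is correct and follows exactly the route the paper intends: the lemma is read off from the displayed factorization $\det\Phi(\partial r/\partial a)=t^{-4l}\det P$ by identifying the bottom ($t^0$) and top ($t^{2l}$) coefficient matrices of $P$ and using that $A$, $B$, $W$, $BA^{-1}$ all have determinant $1$. Your explicit bookkeeping of the degree range and of the sign of the $t^{2l}$ coefficient supplies precisely the routine verification the paper omits.
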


We can again apply the similar argument as in Subsection~\ref{subsection:(2m+1,2n)n>0} 
and hence $\Delta_{K,\rho}(t)$, where $\rho$ is parabolic, determines the knot genus in this case. 

By Subsections~\ref{subsection:(2m,2n)n>0} to \ref{subsection:(2m+1,2n)n<0}, 
Theorem~\ref{thm:main-theorem} (1) immediately follows. 

\section{The fibering problem}
\label{condition}

In this section we study some properties of the parabolic representation spaces of 2-bridge knots and give the proof of Theorem \ref{thm:main-theorem} (2).

\subsection{Parabolic representations of 2-bridge knots} Consider the 2-bridge knot $K=\fb(p,q)$, 
where $p>q \ge 1$ are relatively prime. 
The knot group $G_K$ has a presentation $G_K=\langle a,b \mid wa=bw \rangle$, 
where $w=a^{\varepsilon_1}b^{\varepsilon_2} \cdots a^{\varepsilon_{p-2}}b^{\varepsilon_{p-1}}$ 
and $\varepsilon_j=(-1)^{\lfloor jq/p\rfloor}$, see e.g. \cite{BZ}.

Let $\phi_K(x,y)$ be the defining equation for the non-abelian representations into $SL_2(\BC)$ of 
$G_K$, 
where 
$x=\tr \rho(a)=\tr \rho(b)$ and $y=\tr \rho(ab^{-1})$. 
Then $\phi_K(2,y)$ is the defining equation for the parabolic representations. It is known that $\phi_K(2,y) \in \BZ[y]$ is a monic polynomial of degree $d=\frac{p-1}{2}$, see \cite{Ri, Le93}. 

We want to study the irreducibility of $\phi_K(2,y) \in \BZ[y]$. 

\begin{lemma}
One has $\phi_{K}(2,y) = S_d(y)+S_{d-1}(y)$ in $\BZ_2[y]$.
\label{mod2}
\end{lemma}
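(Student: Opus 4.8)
The plan is to reduce modulo $2$ and exploit that, at $x=2$, the two generators become involutions. Since
$$A^{-1}=\begin{pmatrix} 1 & -1 \\ 0 & 1 \end{pmatrix}\equiv A \quad\text{and}\quad B^{-1}=\begin{pmatrix} 1 & 0 \\ y-2 & 1 \end{pmatrix}\equiv B \pmod 2,$$
every factor $a^{\ve_j}$, $b^{\ve_j}$ of $w$ reduces to $A$, respectively $B$, \emph{independently of the sign} $\ve_j=\pm1$. As $w=a^{\ve_1}b^{\ve_2}\cdots a^{\ve_{p-2}}b^{\ve_{p-1}}$ alternates $a$'s and $b$'s with exactly $d=\frac{p-1}{2}$ of each, this forces
$$W:=\rho(w)\equiv (AB)^d \pmod 2.$$
This collapse is the heart of the matter: it makes all the combinatorial data $\ve_j$ (equivalently the dependence on $q$) disappear mod $2$.

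Next I would identify $\phi_K(2,y)$ with a single entry of $W$. Exactly as in the computation behind Lemma \ref{wabw}, writing out $WA-BW$ with $A,B$ as above gives $WA-BW=\bigl(\begin{smallmatrix} 0 & (W)_{11} \\ (y-2)(W)_{11} & (W)_{21}-(2-y)(W)_{12}\end{smallmatrix}\bigr)$, and the parabolic condition is the vanishing of the $(1,2)$-entry; thus $\phi_K(2,y)$ is, up to the standard normalization (irrelevant mod $2$), the entry $(W)_{11}$. Hence it suffices to compute $(W)_{11}\in\BZ_2[y]$.

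I would then compute $AB$ mod $2$:
$$AB\equiv C:=\begin{pmatrix} 1+y & 1 \\ y & 1\end{pmatrix}\pmod 2,\qquad \det C=1,\quad \tr C\equiv y \pmod 2.$$
Applying the matrix recursion \eqref{1} to the powers of the $SL_2$-matrix $C$ (i.e. Cayley--Hamilton, as in the proof of Lemma \ref{wabw}) gives $C^d=S_{d-1}(\tr C)\,C-S_{d-2}(\tr C)\,I$; since reduction mod $2$ commutes with evaluating the integral polynomials $S_i$ and $\tr C\equiv y$, this yields $W\equiv S_{d-1}(y)\,C-S_{d-2}(y)\,I \pmod 2$. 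Reading off the top-left entry, $(W)_{11}\equiv S_{d-1}(y)(1+y)-S_{d-2}(y)=S_{d-1}(y)+\bigl(yS_{d-1}(y)-S_{d-2}(y)\bigr)$, and the Chebyshev recursion $S_d(y)=yS_{d-1}(y)-S_{d-2}(y)$ turns the bracket into $S_d(y)$. Therefore $\phi_K(2,y)\equiv S_d(y)+S_{d-1}(y)$ in $\BZ_2[y]$, as claimed; note that $S_d(y)+S_{d-1}(y)$ is monic of degree $d$, consistent with the cited fact that $\phi_K(2,y)$ is monic of degree $d$.

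The routine parts are the two matrix computations and the Chebyshev bookkeeping. The only steps needing care are the first two: justifying $A^{-1}\equiv A$ and $B^{-1}\equiv B$ so that $W\equiv(AB)^d$ mod $2$, and correctly pinning down which entry of $W$ is the Riley polynomial $\phi_K(2,y)$. Once these are in place the identity drops out immediately.
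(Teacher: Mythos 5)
Your proof is correct and follows essentially the same route as the paper: the key step in both is that $A^{-1}\equiv A$ and $B^{-1}\equiv B \pmod 2$ force $W\equiv (AB)^d$, after which the polynomial is read off via the Chebyshev recursion. The only cosmetic difference is that you extract $\phi_K(2,y)$ as the $(1,1)$-entry of $(AB)^d$ using \eqref{1}, whereas the paper expands $WA+BW$ via \eqref{-1} and reads off the coefficient of $A+B$; these computations are equivalent.
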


\begin{proof}
The proof is similar to that of \cite[Proposition A.2]{LT}. 

Suppose $\rho$ is a parabolic representation. Let $A=\rho(a), \,B=\rho(b)$ and $W=\rho(w)$.  Taking conjugation if necessary, we can assume that 
\begin{equation}
A=\left[ \begin{array}{cc}
1 & 1\\
0 & 1 \end{array} \right] \quad \text{and} \quad B=\left[ \begin{array}{cc}
1 & 0\\
2-y & 1 \end{array} \right]
\end{equation}
where $y=\tr AB^{-1} \in \BC$ satisfies the matrix equation $WA-BW=0$. 

By the Cayley-Hamilton theorem applying for matrices in $SL_2(\BC)$ we have $A+A^{-1}=\tr(A)I=2I = 0 \pmod{2}$, i.e. $A^{-1} = A \pmod{2}$. Similarly, $B^{-1}=B \pmod{2}$. It implies that $W=A^{\ve_1}B^{\ve_2} \dots A^{\ve_{2d-1}}B^{\ve_{2d}}=(AB)^d \pmod{2}$. By applying \eqref{-1}, we have
\begin{eqnarray*}
WA+BW &=& (AB)^d A +B(AB)^d \\
&=& 
S_{d}(y)(A+B)+S_{d-1}(y)(B^{-1}+A^{-1})\\
&=&  \left( S_d(y)+S_{d-1}(y) \right) (A+B) \pmod{2}
\end{eqnarray*}
where $A+B=\left[ \begin{array}{cc}
0 & 1\\
y & 0 \end{array} \right] \pmod 2$. Hence $\phi_K(2,y)=S_d(y)+S_{d-1}(y)$ in $\BZ_2[y]$.
\end{proof}

Recall from the Introduction that $\CP_2$ is the set of all odd primes $p$ such that $2$ is a primitive root modulo $p$.

\begin{lemma}
Suppose $p \in \CP_2$. Then $S_d(y)+S_{d-1}(y) \in \BZ_2[y]$ is irreducible.
\label{irredmod2}
\end{lemma}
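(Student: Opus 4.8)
The plan is to connect $f(y):=S_d(y)+S_{d-1}(y)$ with the $p$-th cyclotomic polynomial through the substitution $y=z+z^{-1}$, and then transfer irreducibility from the cyclotomic polynomial, whose factorization over $\BZ_2$ is governed precisely by the multiplicative order of $2$ modulo $p$.

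First I would record the closed form $S_i(z+z^{-1})=(z^{i+1}-z^{-i-1})/(z-z^{-1})$, valid as a Laurent-polynomial identity over $\BZ$ (it is immediate from the recurrence defining $S_i$). Substituting into $f$ and simplifying the numerator $z^{d+1}+z^d-z^{-d}-z^{-d-1}=(z+1)(z^d-z^{-d-1})$ against $z-z^{-1}=z^{-1}(z-1)(z+1)$ yields, after multiplying by $z^d$,
\begin{equation*}
z^d f(z+z^{-1})=\frac{z^{2d+1}-1}{z-1}=1+z+\cdots+z^{p-1},
\end{equation*}
since $p=2d+1$. Both sides are genuine polynomials in $z$ with integer coefficients, so this identity descends to $\BZ_2$.

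Second, I would invoke the factorization of cyclotomic polynomials over finite fields. As $p$ is an odd prime, $1+z+\cdots+z^{p-1}$ is the $p$-th cyclotomic polynomial; it is separable over $\BZ_2$ and splits into irreducible factors all of the same degree $e=\mathrm{ord}_p(2)$, the multiplicative order of $2$ in $(\BZ/p\BZ)^*$ (equivalently, the common size of the Frobenius orbits on the primitive $p$-th roots of unity). The hypothesis $p\in\CP_2$ means $2$ is a primitive root modulo $p$, i.e. $e=p-1$, so the cyclotomic polynomial is itself irreducible over $\BZ_2$. Consequently, fixing a root $\alpha$ in an algebraic closure of $\BZ_2$ (a primitive $p$-th root of unity), we have $[\BZ_2(\alpha):\BZ_2]=p-1=2d$.

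Finally I would carry out the degree descent. Put $\beta=\alpha+\alpha^{-1}$. Evaluating the displayed identity at $z=\alpha$ gives $\alpha^d f(\beta)=0$ with $\alpha\neq0$, so $\beta$ is a root of $f$. Since $\alpha$ satisfies $X^2-\beta X+1=0$ over $\BZ_2(\beta)$, we have $[\BZ_2(\alpha):\BZ_2(\beta)]\le 2$, and therefore $[\BZ_2(\beta):\BZ_2]\ge d$. On the other hand $f$ is monic of degree $d$ with $\beta$ a root, so $[\BZ_2(\beta):\BZ_2]\le d$. Hence $[\BZ_2(\beta):\BZ_2]=d$, the minimal polynomial of $\beta$ divides the monic degree-$d$ polynomial $f$ and has degree $d$, so it equals $f$; thus $f=S_d+S_{d-1}$ is irreducible over $\BZ_2$. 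The main obstacle is the bookkeeping in this last step: one must ensure $[\BZ_2(\alpha):\BZ_2(\beta)]$ is exactly $2$ rather than $1$, so that $\beta$ has degree exactly $d$. This is forced automatically by the two-sided degree bound together with $\deg f=d$ and the monicity of $f$; the only fact needed about $\alpha$ is that $\alpha\neq\alpha^{-1}$ (true since $\alpha$ is a primitive $p$-th root of unity with $p$ odd), guaranteeing that $X^2-\beta X+1$ is the correct quadratic. The other essential input is the cyclotomic factorization theorem over $\BZ_2$, which is exactly what converts the primitive-root hypothesis into irreducibility.
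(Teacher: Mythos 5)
Your proof is correct and follows essentially the same route as the paper: substitute $y=z+z^{-1}$ to identify $z^d\bigl(S_d(y)+S_{d-1}(y)\bigr)$ with the $p$-th cyclotomic polynomial over $\BZ_2$, then use that $\Phi_p$ is irreducible over $\BZ_2$ exactly when $2$ is a primitive root modulo $p$. The only difference is that you spell out the degree-descent argument ($[\BZ_2(\alpha):\BZ_2]=2d$, $[\BZ_2(\alpha):\BZ_2(\beta)]\le 2$, $\deg f=d$) justifying the implication that the paper states without proof, which is a welcome addition but not a different method.
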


\begin{proof}
Let $y=u+u^{-1}$. Then $$S_d(y)+S_{d-1}(y)=\frac{u^{d+1}+u^{-(d+1)}}{u+u^{-1}}+\frac{u^{d}+u^{-d}}{u+u^{-1}}=u^{-d}\,\frac{1+u^{2d+1}}{1+u}.$$ 

Suppose $p \in \CP_2$. We will show that $\frac{1+u^{p}}{1+u} \in \BZ_2[u]$ is irreducible. This will imply that $S_d(y)+S_{d-1}(y) \in \BZ_2[y]$ is irreducible.

We have $\frac{1+u^{p}}{1+u}=u^{p-1}+\dots+u+1$ is the $p^{th}$-cyclotomic polynomial $C_p(u) \in \BZ_2[u]$ (since $p$ is an odd prime). It is well known that $C_p(u) \in \BZ_2[u]$ is irreducible if $p \in \CP_2$, see e.g. \cite[Theorem 11.2.8]{Ro}. The lemma follows.
\end{proof}

\begin{proposition}
Suppose $p \in \CP_2$. Then $\phi_{K}(2,y) \in \BZ[y]$ is irreducible.
\label{Zirred}
\end{proposition}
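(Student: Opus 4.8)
The goal is to prove that $\phi_K(2,y) \in \BZ[y]$ is irreducible whenever $p \in \CP_2$, given the two preceding lemmas. The plan is to pass to the reduction modulo $2$ and transfer irreducibility back up to $\BZ[y]$ using the monic structure of $\phi_K(2,y)$. First I would recall from the paragraph preceding Lemma~\ref{mod2} that $\phi_K(2,y) \in \BZ[y]$ is monic of degree $d = \frac{p-1}{2}$. This monicity is the crucial input, because it means the reduction map $\BZ[y] \to \BZ_2[y]$ preserves the degree: the leading coefficient is $1$, which does not vanish mod $2$, so $\deg_{\BZ_2} \overline{\phi_K(2,y)} = \deg_{\BZ} \phi_K(2,y) = d$.

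Next I would invoke Lemma~\ref{mod2}, which identifies the reduction $\overline{\phi_K(2,y)}$ with $S_d(y) + S_{d-1}(y)$ in $\BZ_2[y]$, together with Lemma~\ref{irredmod2}, which (under the hypothesis $p \in \CP_2$) asserts that $S_d(y) + S_{d-1}(y)$ is irreducible in $\BZ_2[y]$. Thus $\phi_K(2,y)$ reduces modulo $2$ to an irreducible polynomial of the same degree $d$. The standard transfer argument then finishes the proof: if $\phi_K(2,y)$ factored nontrivially over $\BZ$ as a product of two polynomials each of positive degree, then since $\phi_K(2,y)$ is monic one may take both factors monic (of positive degree summing to $d$), and reducing modulo $2$ would yield a nontrivial factorization of $S_d(y)+S_{d-1}(y)$ in $\BZ_2[y]$ into factors of the same positive degrees, contradicting Lemma~\ref{irredmod2}. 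Hence $\phi_K(2,y)$ is irreducible over $\BZ$.

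The only subtlety worth spelling out is the reduction to monic factors. Because $\phi_K(2,y)$ is monic over the integral domain $\BZ$, any factorization $\phi_K(2,y) = f(y)g(y)$ with $f,g \in \BZ[y]$ forces the product of the leading coefficients of $f$ and $g$ to equal $1$; up to replacing $f,g$ by $\pm f, \pm g$ we may assume both are monic, and then $\deg f + \deg g = d$ with neither degree dropping under reduction mod $2$. This is exactly what prevents the degenerate scenario in which reduction collapses the degree and destroys the factorization. I expect this step to be entirely routine rather than an obstacle; indeed the genuine mathematical content has already been absorbed into Lemmas~\ref{mod2} and \ref{irredmod2}, and in particular into the number-theoretic fact that $2$ being a primitive root mod $p$ makes the $p$-th cyclotomic polynomial irreducible over $\BZ_2$. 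The present proposition is therefore a short corollary: monicity plus an irreducible reduction mod $2$ yields irreducibility over $\BZ$.
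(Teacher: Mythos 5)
Your proof is correct and follows the same route as the paper: reduce modulo $2$, identify the reduction with $S_d(y)+S_{d-1}(y)$ via Lemma~\ref{mod2}, apply Lemma~\ref{irredmod2}, and use monicity of $\phi_K(2,y)$ to lift irreducibility from $\BZ_2[y]$ to $\BZ[y]$. Your extra remarks on degree preservation and normalizing the factors to be monic just make explicit the standard step the paper leaves implicit.
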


\begin{proof}
By Lemma \ref{mod2}, $\phi_{K}(2,y) = S_d(y)+S_{d-1}(y) \in \BZ_2[y]$. Since $p \in \CP_2$, $S_d(y)+S_{d-1}(y) \in \BZ_2[y]$ is irreducible by Lemma \ref{irredmod2}. It implies that $\phi_{K}(2,y)$ is irreducible in $\BZ_2[y]$. Since $\phi_K(2,y) \in \BZ[y]$ is a monic polynomial in $y$, it is irreducible in $\BZ[y]$.
\end{proof}

\subsection{Proof of Theorem~\ref{thm:main-theorem} (2)} It is known that $J(k,2n)$ is fibered only for the trivial knot $J(k,0)$, the trefoil knot $J(2,-2)$, the figure eight knot $J(2,2)$, the knots $J(1,2n)$ for any $n$, and the knots $J(3,2n)$ for $n>0$.

\subsubsection{\textbf{The case} $J(2m,2n),\, m>1$} \label{subsection:(2m,2n)} 

We will apply the result in the previous subsection to study the fibering problem for $K=J(2m,2n)$. 

Let $p=|4mn-1|$ then it is known that $\phi_K(2,y)$ has degree $\frac{p-1}{2}$. By Proposition \ref{Zirred}, the polynomial $\phi_K(2,y) \in \BZ[y]$ is irreducible 
if $p \in \CP_2$.

\begin{proposition}
Suppose $m>1$ and $p=|4mn-1| \in \CP_2$. Then $\Delta_{K,\rho}(t)$ is non-monic for every parabolic representation $\rho$.
\end{proposition}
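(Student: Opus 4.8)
The plan is to read off the leading coefficient of $\Delta_{K,\rho}(t)$ and show it can never equal $1$ (nor $-1$). By Proposition~\ref{prop1}, for a parabolic representation (so $x=2$) the top coefficient of $\Delta_{K,\rho}(t)$ is
$$C(y):=\frac{T_n(\lambda)-2}{\lambda-2}\cdot\frac{T_m(y)-2}{y-2},$$
and by reciprocity the bottom coefficient agrees with it; thus $\Delta_{K,\rho}(t)$ is monic precisely when $C(y_0)=1$ (or, allowing for the sign from the normalization and reciprocity, $C(y_0)=\pm1$) for the value $y_0$ of the parameter $y$ attached to $\rho$, i.e.\ for a root $y_0$ of $\phi_K(2,y)$. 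The two factors of $C(y)$ lie in $\BZ[y]$, since at $x=2$ one has $\lambda=(y-2)^2S_{m-1}^2(y)+2\in\BZ[y]$ and $(T_k(z)-2)/(z-2)\in\BZ[z]$; hence $C(y)\in\BZ[y]$. As $\phi_K(2,y)$ is monic in $\BZ[y]$, the root $y_0$ is an algebraic integer, and therefore $C(y_0)$ is an algebraic integer. If it were $\pm1$, then $C(y)\mp1$ would vanish at $y_0$, and because $p=|4mn-1|\in\CP_2$ makes $\phi_K(2,y)$ irreducible over $\BZ$ by Proposition~\ref{Zirred}, it would follow that $\phi_K(2,y)$ divides $C(y)\mp1$ in $\BZ[y]$.

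First I would reduce this divisibility modulo $2$. By Lemmas~\ref{mod2} and~\ref{irredmod2}, $\overline{\phi_K(2,y)}=S_d(y)+S_{d-1}(y)$ is irreducible in $\mathbb{F}_2[y]$ (where $d=(p-1)/2$), so $\mathbb{F}_2[y]/(\overline{\phi_K(2,y)})\cong\mathbb{F}_{2^d}$ is a field, in which both $+1$ and $-1$ reduce to $1$. Hence it suffices to prove that the image $\overline{C(y)}$ of $C(y)$ in this field is different from $1$; this single inequality rules out $C(y_0)=\pm1$ for every root $y_0$ at once. To compute $\overline{C(y)}$ I would substitute $y=u+u^{-1}$, where as in the proof of Lemma~\ref{irredmod2} the relation $\overline{\phi_K(2,y)}=0$ becomes $u^p=1$ with $u$ a primitive $p$-th root of unity, and use the characteristic-$2$ identities $(u+u^{-1})S_{m-1}(y)=u^m+u^{-m}$, the Frobenius rule $a^2+b^2=(a+b)^2$, and $(T_k(z)-2)/(z-2)\equiv T_k(z)/z\pmod 2$. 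These give $\lambda\equiv (yS_{m-1}(y))^2\equiv T_m(y)^2=u^{2m}+u^{-2m}$, so that substituting and collapsing squares yields
$$\overline{C(y)}=\frac{u^{2mn}+u^{-2mn}}{(u^m+u^{-m})(u+u^{-1})}=\frac{u^{\,d+1+m}}{(u+1)(u^m+1)^2},$$
where in the last step I used $u^p=1$ with $2mn=(p+1)/2=d+1$ for $n>0$; the case $n<0$ is entirely parallel (with $|n|$ in place of $n$, and $2m|n|=d$) and produces exactly the same closed form.

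It then remains to check $\overline{C(y)}\neq1$, equivalently that
$$f(u):=u^{\,d+1+m}+u^{2m+1}+u^{2m}+u+1$$
is nonzero in $\mathbb{F}_{2^{2d}}$. Since the minimal polynomial of $u$ over $\mathbb{F}_2$ is the cyclotomic polynomial $C_p(u)=1+u+\cdots+u^{p-1}$ (irreducible because $p\in\CP_2$), the powers $1,u,\dots,u^{p-2}=u^{2d-1}$ form an $\mathbb{F}_2$-basis. A short check using $m>1$ shows the five exponents $0,1,2m,2m+1,d+1+m$ are pairwise distinct and all at most $2d-1$, except in the single borderline case $(m,n)=(2,1)$ where $d+1+m=2d$. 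In the generic case $f$ is a sum of distinct basis vectors, hence nonzero; in the borderline case one substitutes $u^{2d}=u^{p-1}=1+u+\cdots+u^{2d-1}$ and finds $f=u^2+u^3\neq0$. This proves $\overline{C(y)}\neq1$ and completes the argument. The main obstacle is the middle step: carrying out the characteristic-$2$ reduction of $C(y)$ so that it collapses to the displayed monomial-over-product form, for this is exactly where the special structure of $\lambda$ at $x=2$ and the identity $yS_{m-1}(y)\equiv T_m(y)$ must be combined. Once the expression is in closed form, the non-vanishing is a routine (if case-sensitive) linear-independence check over $\mathbb{F}_2$.
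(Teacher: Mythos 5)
Your proof is correct, but it takes a genuinely different and considerably longer route than the paper's. The paper's entire argument is a degree count: at $x=2$ the leading coefficient $h(y)=\frac{T_n(\lambda)-2}{\lambda-2}\cdot\frac{T_m(y)-2}{y-2}$ is an integer polynomial in $y$ of degree $2m(n-1)+(m-1)=2mn-(m+1)$, which for $m>1$ is strictly smaller than $\deg\phi_K(2,y)=2mn-1=\frac{p-1}{2}$; since $\phi_K(2,y)$ is irreducible over $\BZ$ by Proposition~\ref{Zirred}, it cannot divide the nonzero polynomial $h(y)-1$ of strictly smaller degree, so $h\neq 1$ at every root, and the proof ends there. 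You instead push the divisibility statement into $\mathbb{F}_2[y]/(\overline{\phi_K(2,y)})$ and evaluate $h$ explicitly at the roots $y=u+u^{-1}$, $u^p=1$, using $yS_{m-1}(y)\equiv T_m(y)$ and $\lambda\equiv T_m(y)^2 \pmod 2$ to collapse the coefficient to $u^{d+1+m}/\bigl((u+1)(u^m+1)^2\bigr)$; I checked these identities, the exponent bookkeeping, and the borderline case $(m,n)=(2,1)$ (which in fact never arises, as $7\notin\CP_2$), and everything holds. Your version buys two things: it excludes $h=\pm1$ simultaneously (the paper only addresses $+1$, though its degree argument applies verbatim to $h+1$), and it does not depend on the degree drop $\deg h<\deg\phi_K(2,y)$ --- which fails precisely when $m=1$ --- so the same computation would in principle also handle twist knots $J(2,2n)$ with $|4n-1|\in\CP_2$, a case the paper must treat by a separate real-variable argument in Subsection~\ref{subsection:(2,2n)}. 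The price is the explicit root-of-unity calculation and the linear-independence case analysis, all of which the paper avoids.
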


\begin{proof}
We only need to consider the case $n>0$. The case $n<0$ is similar.

Suppose $\rho$ is a parabolic representation, i.e. $x=2$. Since $k=2m$, by Proposition \ref{prop1} the coefficient of the highest degree term of $\Delta_{K,\rho}(t)$ is $h(y)=\frac{T_n(\lambda)-2}{\lambda-2} \times  \frac{T_m(y)-2}{y-2}$, an integer polynomial in $y$ of degree $(n-1)(2m)+(m-1)=2mn-(m+1)<2mn-1=\frac{p-1}{2}$.

Since $p \in \CP_2$, $\phi_K(2,y) \in \BZ[y]$ is irreducible. It implies that $\phi_K(2,y)$ does not divide $h(y)-1$ in $\BZ[y]$. Hence $h(y) \not= 1$ when $\phi_K(2,y)=0$. The proposition follows. 
\end{proof}

\subsubsection{\textbf{Twist knots} $J(2,2n)$}\label{subsection:(2,2n)} For $K=J(2,2n)$ we have $\lambda=y^2-yx^2+2x^2-2$ and $\phi_K(x,y)=-(y+1-x^2)S_{n-1}(\lambda)-S_{n-2}(\lambda)$. Suppose $\rho$ is a non-abelian representation. By Proposition \ref{prop1} the coefficient of the highest degree term of $\Delta_{K,\rho}(t)$ is $\frac{T_n(\lambda)-2}{\lambda-2}$. We want to show that for $|n|>1$, we have $\frac{T_n(\lambda)-2}{\lambda-2} \not= 1$ when $\phi_K(x,y)=0$ and $x=2$. This will imply that for any parabolic representation $\rho$, $\Delta_{K,\rho}(t)$ is monic if and only if $|n|=1$.

\begin{lemma}\label{lem:twist-knot}
Suppose $x=2$. For $|n|>1$, one has $\gcd(\phi_K(x,y),\frac{T_n(\lambda)-2}{\lambda-2}-1)=1$ in $\BC[y]$.
\end{lemma}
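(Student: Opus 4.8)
The plan is to mimic the proofs of Lemmas \ref{twist}, \ref{2} and \ref{lem1}: I will show that $\phi_K(2,y)$ and $\frac{T_n(\lambda)-2}{\lambda-2}-1$ have no common root $y\in\BC$, where throughout $x=2$, so that $\lambda=y^2-4y+6$ and $\phi_K(2,y)=-(y-3)S_{n-1}(\lambda)-S_{n-2}(\lambda)$. First I would rewrite the second factor as
$$\frac{T_n(\lambda)-2}{\lambda-2}-1=\frac{T_n(\lambda)-\lambda}{\lambda-2},$$
so that, apart from the removable point $\lambda=2$, its zeros are the values with $T_n(\lambda)=\lambda$. Writing $\lambda=\beta+\beta^{-1}$ for the root $\beta=\beta_+$ of $z^2-\lambda z+1$, this reads $\beta^n+\beta^{-n}=\beta+\beta^{-1}$; since $w\mapsto w+w^{-1}$ is two-to-one, it forces $\beta^n=\beta$ or $\beta^n=\beta^{-1}$, that is $\beta^{n-1}=1$ or $\beta^{n+1}=1$. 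In particular any such $\beta$ is a root of unity, so $\lambda=\beta+\beta^{-1}\in[-2,2]$ is real.

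Next I would treat the two cases separately, assuming first $\beta\neq\pm1$ and using $S_{n-1}(\lambda)=\frac{\beta^n-\beta^{-n}}{\beta-\beta^{-1}}$ and $S_{n-2}(\lambda)=\frac{\beta^{n-1}-\beta^{-(n-1)}}{\beta-\beta^{-1}}$. When $\beta^{n-1}=1$ these collapse to $S_{n-1}(\lambda)=1$, $S_{n-2}(\lambda)=0$, giving $\phi_K(2,y)=3-y$; when $\beta^{n+1}=1$ they collapse to $S_{n-1}(\lambda)=-1$, $S_{n-2}(\lambda)=-\lambda$, giving $\phi_K(2,y)=(y-3)+\lambda=y^2-3y+3$. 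Imposing $\phi_K(2,y)=0$ and substituting back through $\lambda=y^2-4y+6$ then yields, respectively, $y=3$ with $\lambda=3>2$, and $y=\tfrac{3\pm i\sqrt3}{2}$ with $\lambda=3-y$ non-real. Either outcome places $\lambda$ off the interval $[-2,2]$ (or off the real line entirely), so $\beta$ cannot be a root of unity, contradicting $\beta^{n\mp1}=1$. Hence there is no common root with $\beta\neq\pm1$.

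It then remains to dispose of the boundary values. For $\beta=1$ we have $\lambda=2$, forced by $y=2$; but by Lemma \ref{simple} the first factor $\frac{T_n(\lambda)-2}{\lambda-2}$ equals $n^2$ there, so the second polynomial takes the value $n^2-1\neq0$ for $|n|>1$ and $\lambda=2$ is not even a zero of it. For $\beta=-1$ we have $\lambda=-2$, which satisfies $T_n(\lambda)=\lambda$ only when $n$ is odd; then $y=2\pm2i$, while for such odd $n$ the values $S_{n-1}(-2)=n$ and $S_{n-2}(-2)=-(n-1)$ give $\phi_K(2,y)=-ny+4n-1$, vanishing only at the real value $y=4-1/n$ and so excluding $y=2\pm2i$. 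This settles $n>1$; the case $n<-1$ is identical after writing $n=-l$ and using $S_{-l-1}=-S_{l-1}$, $S_{-l-2}=-S_l$, which merely interchanges the roles of the two main cases.

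The main obstacle is purely bookkeeping: arranging the case analysis so that each branch produces a sharp contradiction between the constraint ``$\beta$ is a root of unity'' (equivalently $\lambda\in[-2,2]\subset\BR$) coming from $T_n(\lambda)=\lambda$, and the value of $\lambda$ forced by the low-degree equation $\phi_K(2,y)=0$ through $\lambda=y^2-4y+6$. The point throughout is that solving $\phi_K(2,y)=0$ always drives $\lambda$ off $[-2,2]$—either by making it real and greater than $2$, or by making it non-real—precisely where no root of unity can live.
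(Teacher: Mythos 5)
Your proposal is correct and follows essentially the same route as the paper: reduce $T_n(\lambda)=\lambda$ to $\beta_+^{n-1}=1$ or $\beta_+^{n+1}=1$, evaluate $S_{n-1}(\lambda)$ and $S_{n-2}(\lambda)$ in each branch, solve the resulting low-degree equation $\phi_K=0$, and contradict the fact that $\lambda=\beta_++\beta_+^{-1}$ must lie in $[-2,2]$. The only differences are cosmetic: the paper keeps $x$ general and concludes for $3\le x^2<5$, whereas you specialize to $x=2$ at once and treat the boundary cases $\lambda=\pm 2$ somewhat more directly (your handling of $\lambda=-2$ via the non-reality of $y$ is a clean substitute for the paper's reference back to Lemma \ref{twist}).
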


\begin{proof}
We only need to consider the case $n>1$. The case $n<-1$ is similar.

Suppose $T_n(\lambda)=\lambda$ and $\lambda \not= 2$. Then $\beta_+^n + \beta_-^n=\beta_+ + \beta_-$, i.e., $\beta_+^{n-1}=1$ or $\beta_+^{n+1}=1$. It implies that $\lambda=-2$, or $\lambda=2\cos \frac{2j\pi}{n-1}$ for some $1 \le j \le n-2$ and $j \not=  \frac{n-1}{2}$, or $\lambda=2\cos \frac{2j\pi}{n+1}$ for some $1 \le j \le n$ and $j \not= \frac{n+1}{2}$.

\underline{\em Case 1:} $\lambda=-2$ (in this case $n$ must be odd). By similar arguments as in the proof of Lemma \ref{twist}, we have  $\phi_K(x,y) \not= 0$ if $x \in \BR$.

\underline{\em Case 2:} $\lambda=2\cos \frac{2j\pi}{n-1}$ for some $1 \le j \le n-2$ and $j \not= \frac{n-1}{2}$. Then $S_{n-1}(\lambda)=1$ and $S_{n-2}(\lambda)=0$, hence $\phi_K(x,y)=-(y+1-x^2)$.

Suppose $\phi_K(x,y)=0$. Then $y=x^2-1$ and $\lambda=y^2-yx^2+2x^2-2=x^2-1$. This cannot occur if $x^2-1 \ge 2$, since $\lambda<2$. Hence $\phi_K(x,y) \not= 0$ if $x^2 \ge 3$.

\underline{\em Case 3:} $\lambda=2\cos \frac{2j\pi}{n+1}$ for some $1 \le j \le n$ and $j \not= \frac{n+1}{2}$. Then $S_{n-1}(\lambda)=-1$ and $S_{n-2}(\lambda)=-\lambda$, hence $\phi_K(x,y)=y+1-x^2+\lambda$.

Suppose $\phi_K(x,y)=0$. Then $y=x^2-\lambda-1$ and $\lambda=y^2-yx^2+2x^2-2=\lambda^2+\lambda (2-x^2)+x^2-1$, i.e. $\lambda^2-\lambda(x^2-1)+x^2-1=0$. This equation does not have any real solution $\lambda$ if $1<x^2<5$. Hence $\phi_K(x,y) \not= 0$ if $1 <x^2<5$.

In all cases, $\phi_K(x,y) \not= 0$ when $\frac{T_n(\lambda)-2}{\lambda-2}=1$ and $3 \le x^2 <5$. The lemma follows.
\end{proof}

\begin{remark}
Lemma~\ref{lem:twist-knot} gives another proof of \cite[Theorem~1.2]{Mo} without using the irreducibility of $\phi_{J(2,2n)}(2,y) \in \BZ[y]$ proved in \cite{HS2}. 
\end{remark}

\subsubsection{\textbf{The case} $J(2m+1,2n)$}\label{subsection:(2m+1,2n)} Let $K=J(2m+1,2n)$. Suppose $\rho$ is a non-abelian representation. By Propositions \ref{prop2} and \ref{prop3}, the coefficient of the highest degree term of $\Delta_{K,\rho}(t)$ is $\frac{T_m(y)-2}{y-2}$ if $n>0$, and is $\frac{T_{m+1}(y)-2}{y-2}$ if $n<0$. We want to show that for $m>1$, we have $\frac{T_m(y)-2}{y-2} \not= 1$ when $\phi_K(2,y)=0$. This will imply that for any parabolic representation $\rho$, $\Delta_{K,\rho}(t)$ is monic if and only if $K=J(1,2n)$, or $K=J(3,2n)$ and $n>0$.

The key point of the proof of the following lemma is to apply Proposition \ref{al}.

\begin{lemma}
Suppose $x=2$. For $m>1$, one has $\gcd(\phi_K(2,y),\frac{T_m(y)-2}{y-2}-1)=1$ in $\BC[y]$.
\end{lemma}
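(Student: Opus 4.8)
The plan is to argue exactly as in the genus lemmas (Lemmas~\ref{twist}, \ref{2} and \ref{lem1}) and in Lemma~\ref{lem:twist-knot}, namely to show that the two polynomials have no common zero in $\BC$. The decisive simplification here is that every zero of $\frac{T_m(y)-2}{y-2}-1$ is forced to be \emph{real} and $<2$, which lets me apply Proposition~\ref{al} directly rather than running a case-by-case real-parameter argument.

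First I would determine the zeros of $\frac{T_m(y)-2}{y-2}-1$. Since $T_m(2)=2$, the quantity $\frac{T_m(y)-2}{y-2}$ is a genuine polynomial of degree $m-1$, and by Lemma~\ref{simple} its value at $y=2$ is $m^2$; hence for $m>1$ the number $y=2$ is \emph{not} a zero of $\frac{T_m(y)-2}{y-2}-1$. For $y_0\neq 2$ the relation $\frac{T_m(y_0)-2}{y_0-2}=1$ is equivalent to $T_m(y_0)=y_0$. Writing $y_0=\gamma+\gamma^{-1}$ gives $T_m(y_0)=\gamma^m+\gamma^{-m}$, so the equation becomes $\gamma^m+\gamma^{-m}=\gamma+\gamma^{-1}$, which after multiplication by $\gamma^m$ factors as $(\gamma^{m+1}-1)(\gamma^{m-1}-1)=0$. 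Thus $\gamma$ is a root of unity, so $y_0=\gamma+\gamma^{-1}=2\cos\theta\in[-2,2]$ is real; and because $y_0\neq 2$ we in fact have $y_0<2$.

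Then I would conclude by contradiction. Suppose $y_0\in\BC$ were a common zero of $\phi_K(2,y)$ and $\frac{T_m(y)-2}{y-2}-1$. By the previous step $y_0$ is real with $y_0<2$. But $y_0$ is then a real solution of $\phi_{2m+1,2n}(2,y)=0$, so Proposition~\ref{al} forces $y_0>2$, a contradiction. Hence the two polynomials share no zero and $\gcd(\phi_K(2,y),\frac{T_m(y)-2}{y-2}-1)=1$ in $\BC[y]$. I would also remark that this argument is uniform in $n$ and independent of the sign of $n$, since it uses only the Chebyshev factorization above together with Proposition~\ref{al}, which holds for every $n$.

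The only point that requires care is the bookkeeping at $y=2$ in the first step: one must verify that $\frac{T_m(y)-2}{y-2}$ takes the value $m^2\neq 1$ at $y=2$ for $m>1$, so that $y=2$ (the value arising from $\gamma=1$ in the factorization, where both factors vanish) is genuinely excluded and cannot reappear as a spurious common zero. Everything else is immediate, which is precisely why this proof is so much shorter than the genus lemmas: there no reality restriction on the relevant zeros was available, whereas here the equation $T_m(y)=y$ pins them down to the real interval $[-2,2)$ and Proposition~\ref{al} finishes the job in a single line.
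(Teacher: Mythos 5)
Your proof is correct and follows essentially the same route as the paper: reduce $\frac{T_m(y)-2}{y-2}=1$ (for $y\neq 2$) to $T_m(y)=y$, factor via $y=\gamma+\gamma^{-1}$ as $(\gamma^{m+1}-1)(\gamma^{m-1}-1)=0$ to conclude all such $y$ are real and $<2$, and then invoke Proposition~\ref{al} to rule out $\phi_K(2,y)=0$. Your explicit check that $y=2$ is not a zero (since the polynomial takes the value $m^2\neq 1$ there for $m>1$) is a small point of care the paper leaves implicit, but the argument is the same.
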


\begin{proof}
Suppose $T_m(y)=y$ and $y \not= 2$. Then $\gamma_+^m + \gamma_-^m=\gamma_+ + \gamma_-$, i.e., $\gamma_+^{m-1}=1$ or $\gamma_+^{m+1}=1$. It implies that $y=-2$, or $y=2\cos \frac{2j\pi}{m-1}$ for some $1 \le j \le m-2$ and $j \not=  \frac{m-1}{2}$, or $y=2\cos \frac{2j\pi}{m+1}$ for some $1 \le j \le m$ and $j \not= \frac{m+1}{2}$. In all cases, $y \in \BR$ and $y<2$. Proposition \ref{al} then implies that $\phi_K(2,y) \not= 0$. The lemma follows.
\end{proof}

The arguments in Subsections~\ref{subsection:(2m,2n)},~\ref{subsection:(2,2n)} 
and \ref{subsection:(2m+1,2n)} show Theorem~\ref{thm:main-theorem} (2). 
This completes the proof of Theorem~\ref{thm:main-theorem}. 

\vspace{2mm}

\noindent
\textit{Acknowledgements}. 
The first author was partially supported by 
Grant-in-Aid for Scientific Research (No.\,23540076), 
the Ministry of Education, Culture, Sports, Science 
and Technology, Japan. The second author would like to thank T.T.Q. Le for helpful discussions.

\end{document}